\newtheorem{Theorem}{Theorem}[section]
\newtheorem{Proposition}[Theorem]{Proposition}
\newtheorem{Lemma}[Theorem]{Lemma}
\newtheorem{Definition}[Theorem]{Definition}
\newtheorem{Notation}[Theorem]{Notation}
\newtheorem{Remark}[Theorem]{Remark}
\renewcommand{\L}{\mathcal{L}}
\newcommand{\E}{\mathbb{E}}
\newcommand{\R}{\mathbb{R}}
\newcommand{\Z}{\mathbb{Z}}
\newcommand{\1}{\mathds{1}}
\begin{document}

\begin{frontmatter}

\title{On a Voter model on $\R^{\lowercase{d}}$:\\Cluster growth in the\\ Spatial $\Lambda$-Fleming-Viot Process}
\runtitle{Cluster growth in the S$\Lambda$FV Process}


\author{\fnms{Habib} \snm{Saadi}\ead[label=e1]{h.saadi@imperial.ac.uk} \thanksref{t1} \thanksref{t2}}
\affiliation{Imperial College London}

\thankstext{t1}{Supported by EPSRC Grant EP/E065945/1.}
\thankstext{t2}{The author would like to thank Alison Etheridge, Nic Freeman and Mladen Savov for carefully reading earlier versions of this manuscript.}

\runauthor{H. Saadi}

\begin{abstract}
The spatial $\Lambda$-Fleming-Viot (S$\Lambda$FV) process introduced in 
(Barton, Etheridge and V\'eber, 2010)
can be seen as a direct extension of the Voter Model (Clifford and Sudbury, 1973); (Liggett, 1997).
As such, it is an Interacting Particle System with configuration space $\mathcal{M}^{\R^d}$, where $\mathcal{M}$ is the set of probability measures on some space $K$. Such processes are usually studied thanks to a dual process that describes the genealogy of a sample of particles. In this paper, we propose two main contributions in the analysis of the S$\Lambda$FV process. The first is the study of the growth of a cluster, and the suprising result is that with probability one, every bounded cluster stops growing in finite time. In particular, we discuss why the usual intuition is flawed. The second contribution is an original method for the proof, as the traditional (backward in time) duality methods fail. 
We develop a forward in time method that exploits a martingale property of the process. To make it feasible, we construct adequate objects that allow to handle the complex geometry of the problem. We are able to prove the result in any dimension $d$.
\end{abstract}

\begin{keyword}[class=AMS]
\kwd[Primary ]{60J25}
\kwd{60K35}
\kwd{92D10}
\kwd[; secondary ]{60D05}
\end{keyword}

\begin{keyword}
\kwd{Generalized Fleming--Viot process}
\kwd{interacting particle systems}
\kwd{almost sure properties}
\kwd{cluster}
\end{keyword}

\end{frontmatter}


	\section{Introduction}

				The spatial $\Lambda$-Fleming-Viot process (S$\Lambda$FV) is a model 
				used
				to represent biological evolution on a continuum.
				It was first 
				introduced in \cite{ETHERIDGE_2008_DDASSMMOE_BCP}, 
				and then studied in more details in \cite{BARTON_ETHERIDGE_KELLEHER_2010} , 
				\cite{BARTON_ETHERIDGE_VEBER_2010} and \cite{BERESTYCKI_ETHERIDGE_VEBER_2011}.
				In this setting, given a set of genetic types $K$, a population 
				living on $\R^d$ is represented by a collection of probability measures on $K$. 
				More precisely, the genetic composition at time $t$ of the population at point $x \in \mathbb{R}^d$ 
				is given by a measure $\rho_t(x,\cdot)$ on the type space $K$. 
		        The S$\Lambda$FV  process
   				is a direct spatial extension of the generalised Fleming-Viot processes presented in \cite{DONNELLY_KURTZ_1999a}  and 
   				studied in \cite{BERTOIN_LEGALL_2003}.
		        But it can also be seen as an 
		        interacting particle system generalising 
		        the Voter Model \cite{CLIFFORD_SUDBURY_1973,LIGGETT_1997}.
		        The configuration space for the Voter Model is $\{0,1\}^{\Z^d}$, whereas for the S$\Lambda$FV  process
		        it is $\mathcal{M}^{\R^d}$, where $\mathcal{M}$ is the
		        set of probability measures on $K$. This generalisation of the configuration space is one of the elements
		        that make the study of the S$\Lambda$FV process particularly challenging.\\
		        
		        Our motivation in this article is the study of the fate of a new genetic type created by mutation at time
		        $0$. More precisely, we assume that there are only two types of individuals, $Blue$ and $Red$,
		        and that the new type, say $Red$, occupies a bounded set of $\R^d$ at time $0$. 
				The question is how far this newly created type is going to spread. 
														        
				Because we are working with two types only, the setting simplifies. We have 
				$ \rho_t(x,Blue)=1-\rho_t(x,Red)$, so at time $t$ it is enough to consider the collection of 
				numbers 
				$ \{\rho_t(y,Red), \, y \in \R^d \}$.
				This is why we are going to represent the population at time $t$ by the function
				$$ X_t : \R^d\mapsto [0,1]$$ 
				such that $X_t(y)=\rho_t(y,Red)$. Working with a function instead of a collection of
				probability measures allows us to simplify the notation when manipulating the 
				S$\Lambda$FV process.

		\subsection{The process}
														
				For every time $t \geq 0$, let $X_t$ be a function from $\R^d$ to $[0,1]$. The quantity $X_t(y)$ for $y \in \R^d $ is the 
				frequency of $Red$ individuals at location $y$. The dynamics of $X_t$ is the following.
				Consider a space-time Poisson point process
				$\Pi$  on $\mathbb{R}_+ \times \mathbb{R}^d \times [0,1]$  with rate $dt\otimes dc \otimes dv$, and two constants 
				$0 \leq U < 1$ and $R>0$.
		        Then, for every point $(t,c,v)$ of $\Pi$,
		  		\begin{enumerate}
			  		\item[i)] Draw a ball $B(c,R)$ of radius $R$ centred around 
			  		$c$.
					\item[ii)] If $X_{t-}(c)\geq v$, then the parent is 
					$Red$, and
						for every point $y \in B(c,R)$,
						\begin{equation*}
							X_t(y)=(1-U)X_{t-}(y)+U.
						\end{equation*}
					\item[iii)] If $X_{t-}(c)< v$, then the parent is 
					$Blue$, and
						for every point $y \in B(c,R)$,
						\begin{equation*}
							X_t(y)=(1-U)X_{t-}(y).
						\end{equation*}
				\end{enumerate}
				The steps i), ii) and iii) can be written in a single equation:
				
				\begin{align}
				\label{Single_Transition}
					 X_{t}(y)=X_{t-}(y)+U \1_{\{\|y-c\| \leq R\} } \left(\1_{\{v \leq X_{t-}(c)\} }-X_{t-}(y)\right), \quad y \in \R^d.
					 \\
					 \nonumber
				\end{align}
		  		  		  				  		
				In biological terms, each point $t,c,R$ of the Poisson Point process corresponds to a \emph{reproduction 
				event}
				taking place at time $t$ in a ball $B(c,R)$. First, a parent is chosen at random at location $c$. The parent is $Red$ with probability $X_{t-}(c)$ and $Blue$ with probability $1-X_{t-}(c)$, and her offspring are going to have the same type as her.
				Second, competition for finite resources causes a proportion $U$ of the population inside the ball of 
				centre $c$ and radius $R$ to die. Finally, the offspring of the parent replaces the proportion $U$ 
				of individuals who have died. Births and deaths take place simultaneously at time $t$.
				Figure \ref{Fig_Def_Xt} illustrates the births and deaths events taking place during a single transition at time $t$ corresponding
				to the point $(t,c,v)$ from the Poisson point process $\Pi$.
				
				\begin{figure}
					\subfigure[Sampling of the parent]
					{
						\includegraphics{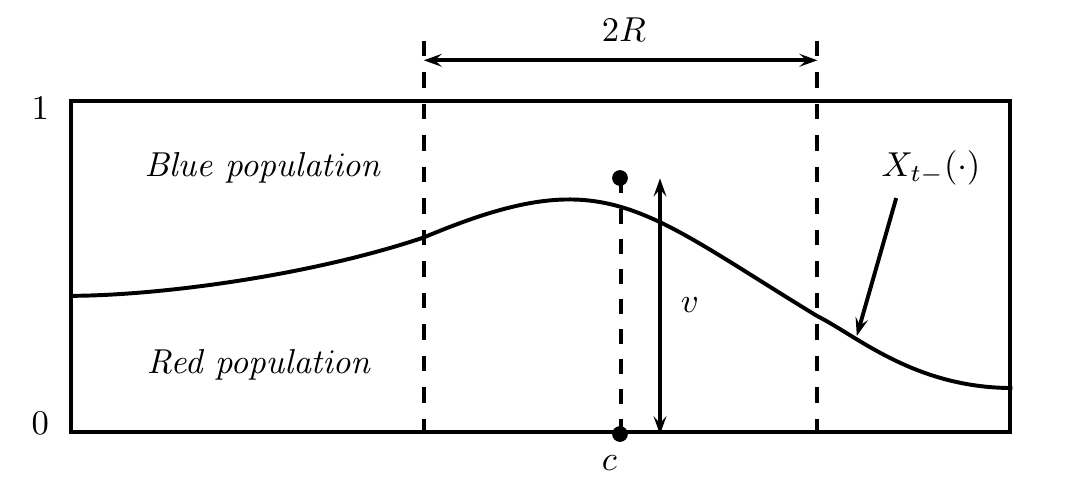}
					}
					\subfigure[Deaths]
					{
						\includegraphics{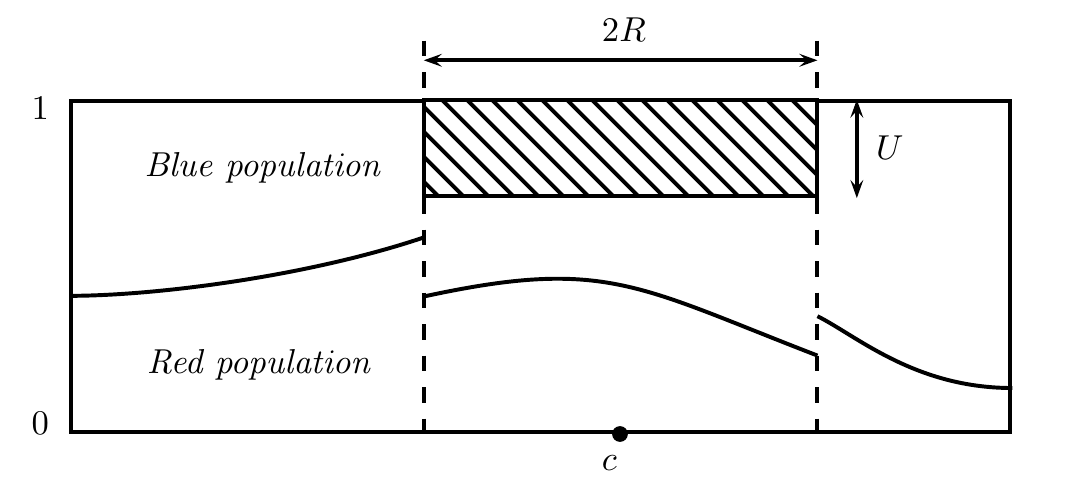}
					}
					\subfigure[Births]
					{
						\includegraphics{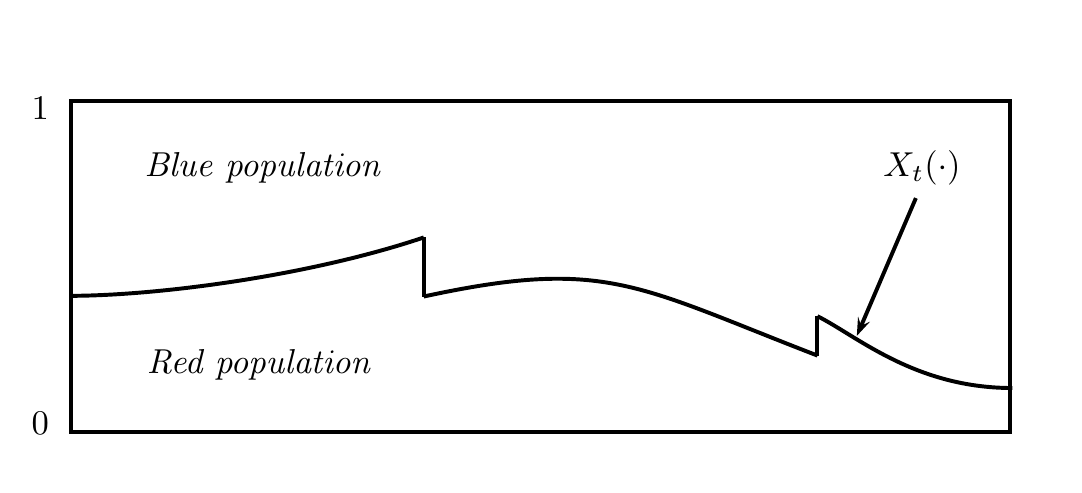}
					}
					\caption{Schematic view in dimension $d=1$ of a Markov transition for the process $X_t(\cdot)$ induced by the point $(t,c,v) \in \Pi$. (a) 
					Because $v > X_{t-}(c)$, the parent is $Blue$. (b) Deaths shrink both populations within the ball $B(c,R)$ by the same factor $1-U$. (c)
					The offspring of the $Blue$ parent replenish the population. All steps (a,b,c) take place instantaneously at time $t$.}
					\label{Fig_Def_Xt}
				\end{figure}

				\begin{Remark}
					We have chosen the parent to be at location $c$, which is a simplification of the model in \cite{BARTON_ETHERIDGE_VEBER_2010}, where the 
					location of the parent was chosen uniformly on the ball. This does not change the model significantly,
					it just simplifies some calculations.
				\end{Remark}
						  				  		  
		  		The presentation of the process we just gave is simply 
		  		an algorithm that describes the jumps of 
		  		$(X_t)_{t\geq0}$, 
		  		but we need to construct it formally as a Markov 
		  		process.
		  		The most natural way is to translate this algorithm
		  		into the infinitesimal generator $\mathcal{L}$ of $X(t)$, which is defined by
		  		\begin{align}
		  			\mathcal{L}I(f) := \lim_{t \rightarrow 0} \frac{\E[ I(X_t) - I(X_0) \, | \, X_0=f]}{t},
		  		\end{align}
		  		where $I$ is a test function, and $f$ is the initial value of the process $(X_t)_{t\geq0}$.
		  		We choose the test function $I$
		  		from the family $I_n( \, \cdot \,;\psi)$ of functions
				of the form
			  	\begin{align}
				  	\label{Test_Func_Red}
		  		  	I_n \big( f;\psi \big)
		  		  	=\int_{(\R^d)^n} \psi(x_1,\dots,x_n) \prod_{i=1}^n
		  		  	 \, f(x_i) \,
		  		  	dx_1 \dots dx_n,
	  		   \end{align}
				where $\psi$ is a function from $(\R^d)$ to $\R$ such that
	  		   $\int |\psi(x_1,\dots,x_n)| dx_1 \dots dx_n< \infty$, and $f$ is a function
	  		   from $(\R^d)^n$ to $[0,1]$ corresponding to $X_0$. The intuition behind this form is that the distribution
	  		   of the function-valued process $(X_t)_{t\geq0}$ is described by the finite-dimensional dynamics at all locations $x_1,\dots,x_n$.

	  		   The generator $\L$ of the process $(X_t)_{t \geq 0}$ is given by

	  		   \begin{align}
		  		  	\nonumber
		  		  	& \quad
		  		  	\L I_n(f;\psi)
		  		  	\\[7pt]
		  		  	\nonumber
		  		  	=
		  		  	\quad &
		  		  	\int_{\R^d}
		  		  	\int_{(\R^d)^n}
		  		  	\sum_{I \subset \{1,\dots,n\}}
		  		  	\Big(
		  		  	\prod_{j \notin I}
		  		  	\1_{x_j \notin B(c,R)}
		  		  	f(x_j)
		  		  	\Big)
		  		  	\times
		  		  	\Big(
		  		  	\prod_{i \in I} \1_{x_i \in B(c,R)}
		  		  	\Big)
		  		  	\\[7pt]
		  		  	\nonumber
		  		  	&
		  		  	\
		  		  	\times
		  		  	\Bigg[
		  		  	f(c) \, 
		  		  	\bigg(
		  		  	\prod_{i \in I} \Big( (1-U) f(x_i)+U \Big)
		  		  	-
		  		  	\prod_{i \in I} f(x_i)
		  		  	\bigg)
		  		  	\\[7pt]
		  		  	\nonumber
		  		  	&
		  		  	\quad \quad \quad \quad \quad \quad \quad  
		  		  	+
		  		  	\big(1-f(c)\big) \,
		  		  	\bigg(
		  		  	\prod_{i \in I} (1-U) f(x_i)
		  		  	-
		  		  	\prod_{i \in I} f(x_i)
		  		  	\bigg)  		  	
		  		  	\Bigg]
		  		  	\\[7pt]
		  		  	\label{Gener_ModSLFV_Red}
		  		  	&
		  		  	\quad \quad \quad \quad \quad \quad \quad \quad 
		  		  	 \quad \quad  \quad \quad  \quad \quad \quad 
		  		  	\psi(x_1,\dots,x_n) \,
		  		  	dx_1 \dots dx_n \,
		  		  	\, dc.
				\end{align}
			   
				To understand this expression, we can think of $(X_t)_{t \geq 0}$ as a jump process
				with possibly an infinite number of jumps at each instant.
				The transitions of the process are indexed by the points $(t,c,v)$ of the Poisson Point process $\Pi$ with
				intensity $dt \otimes dc \otimes dv$. Morally, we can use equation (\ref{Single_Transition}) and write
				the generator in the form
	  		   \begin{align*}
		  		  	\nonumber
		  		  	\L I_n(f;\psi)
		  		  	=
		  		  	\int_{\R^d}
		  		  	\int_{[0,1]}
		  		  	\Big[I_n \big( f_{(c,v)};\psi \big) 
		  		  	- I_n \big( f;\psi \big)\Big]
		  		  	dv \,
		  		  	dc,
				\end{align*}
				where 
				$$ f_{(c,v)}(x)=f(x)+U \1_{\{\|x-c\| \leq R\} }\left(\1_{\{v \leq f(c)\} }-f(x)\right).$$
				When we replace the test functions $I_n$ with their expression, we obtain
	  		   \begin{align*}
		  		  	\nonumber
		  		  	\L I_n(f;\psi)
		  		  	=
		  		  	&
		  		  	\int_{\R^d}
		  		  	\int_{(\R^d)^n}
		  		  	\int_{[0,1]}
		  		  	\Big[
		  		  	\prod_{i=1}^n\, f_{(c,v)}(x_i)
		  		  	- \prod_{i=1}^n\, f(x_i)
		  		  	\Big] dv \,
		  		  	\\[7pt]
		  		  	&
		  		  	\quad \quad \quad \quad \quad \quad \quad \quad 
		  		  	\psi(x_1,\dots,x_n) \,
		  		  	dx_1 \dots dx_n \,
		  		  	dc.
				\end{align*}
				To express the integral 
				\begin{align*}
					& \ 
					\int_{[0,1]} 
		  		  	\Big[
		  		  	\prod_{i=1}^n\, f_{(c,v)}(x_i)
		  		  	- \prod_{i=1}^n\, f(x_i)
		  		  	\Big] dv,
				\end{align*}
				we find the unique set $I \subset \{1,\dots,n\}$ that verifies $x_j \in B(c,R)$ if and only if $j \in I$ (see figure \ref{Fig_Generator} ).
				After careful computations, we obtain expression (\ref{Gener_ModSLFV_Red}).
					  		
				\begin{figure}
					\includegraphics{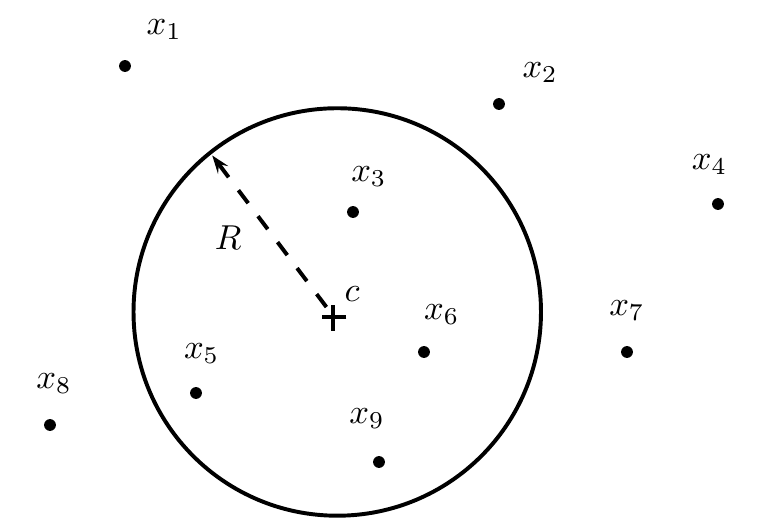}
					\caption{Visualisation of the unique set $I$ that produces a nonzero term in the sum appearing in the definition (\ref{Gener_ModSLFV_Red}) of the generator. In this figure, $d=2$, $n=9$, and the set $I=\{3,5,6,9\}$ indexes the points $x_i$ that belong to the ball of centre $c$ and radius $R$.}
					\label{Fig_Generator}
				\end{figure}

				One needs to prove that there exists a Markov process $(X_t)_{t\geq0}$
		  		that is defined by (\ref{Gener_ModSLFV_Red}). 
		  		A general proof for the existence of the S$\Lambda$FV process is given in \cite{BARTON_ETHERIDGE_VEBER_2010} using duality. However, we do not need to use
		  		this result, 
		  		because in our case we are able to construct directly the process
		  		using our forward in time method, see \S \ref{CTMC}.								
				
		\subsection{Main result}
											
					The bounded support $Red$ population is
					competing against the unbounded support $Blue$ population, so intuitively, we expect the $Red$ population
					to become extinct. The real question is how far the $Red$ population manages
					to spread before ultimately disappearing. This is why we are studying the dynamics of the support of the $Red$ population.
					
					Every time the individual sampled to be the parent is 
					not $Red$, the proportion of the $Red$ population decreases, which decreases the overall probability that the parent 
					at the next event is going to be $Red$. The same reasoning applies to the $Blue$ population. 
					The only way for the support to grow is if the ball of centre $c$ and radius $R$ is not entirely contained in the support of $X_t$,
					while the parent sampled at $c$ is $Red$. On the other hand, if we take $U<1$, the support never shrinks, as once 
					a point $y \in \R^d$ is occupied with a frequency $f(y)$, its future frequencies will always be positive.
					
					Given this schematic view of the dynamics of the process, one intuitively expects a behaviour similar 
					to what we are going to call the \emph{oil film spreading}:
					The proportion of the $Red$ population would converge to zero at every point, but at the same time
					its support would grow
					forever					
					and ultimately would occupy an infinite subset of $R^d$. Stated more naturally, there seems to be no reason
					why the support would not grow to an infinitely large set.\\
					
					However, the actual behaviour of the process is rather counterintuitive. Before stating our main result, 
					we need to introduce some notation.\\
					\begin{Notation}
						\begin{itemize}
							\item
						        For any given function
						        $f: \mathbb{R}^d \rightarrow \mathbb{R}$, we denote its support
			 							by 
						        Supp$(f)$.
							\item
								We define $\mathcal{S}_c$ to be the set of Borel measurable
								functions $f: \, 
						        \mathbb{R}^d \rightarrow [0,1]$ with compact support. We 
						        endow $\mathcal{S}_c$ with the $L^\infty$ norm.
							\item
								Given a set $A \subset \R^d$ and $R>0$, we denote by $A^R$ the $R$-expansion of $A$, 
								that is the set defined by
								$$ A^R:=\{x \in \R^d \text{ s.t. } \min_{y \in A} \|x-y \|\leq R\}.$$
				      \end{itemize}
      				\end{Notation}

		        	\begin{Theorem}
	        		\label{Main_Result}
			        	Let $(X_t)_{t \geq 0}$ be a Markov process
			        	with generator (\ref{Gener_ModSLFV_Red}). Suppose $X_0=f$ is deterministic
			        	and with bounded support.
						Then, there exists a random finite set $B \subset \mathbb{R}^d$, and 
						an  almost surely finite random 
						time $T$ such that
						\begin{equation}
							\forall t > T, \,
							\text{Supp}(X_t) = B \quad \text{a.s.}
						\end{equation}
						Furthermore, 
						\begin{equation}
							\sup_{z \in B} X_t(z) \rightarrow 0 \quad \text{a.s. as } t 
							\rightarrow 
							\infty	\\
						\end{equation}					
					\end{Theorem}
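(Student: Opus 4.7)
The plan is to combine a pathwise forward-in-time construction of $(X_t)_{t\geq 0}$ with a martingale argument tailored to the geometry of the support. I would first construct $X_t$ directly from the Poisson point process $\Pi$ by examining only events whose ball $B(c,R)$ intersects the (growing) current support; this is well-defined because the initial support is bounded and each reproduction event grows the support by at most $|B(0,R)|$. A key preliminary is that $S_t := \text{Supp}(X_t)$ is almost surely non-decreasing in $t$: in case ii) the value at any $y \in B(c,R)$ becomes $(1-U)X_{t-}(y)+U \geq U > 0$, while in case iii) it becomes $(1-U)X_{t-}(y)$, which is strictly positive whenever $X_{t-}(y) > 0$ since $U<1$. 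Hence it suffices to show that only finitely many reproduction events strictly enlarge $S_t$.

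A Poisson point $(t,c,v) \in \Pi$ strictly enlarges $S_{t-}$ iff the parent is Red, i.e.\ $v \leq X_{t-}(c)$, and $B(c,R) \not\subset S_{t-}$. Integrating over $v \in [0,1]$, the instantaneous rate of such ``growth events'' is
\begin{equation*}
\lambda_t \;=\; \int_{c \in S_{t-},\, B(c,R) \not\subset S_{t-}} X_{t-}(c) \, dc,
\end{equation*}
which is supported on the inner $R$-collar of $S_{t-}$. The first half of the theorem thus reduces to showing $\int_0^\infty \lambda_t \, dt < \infty$ almost surely, which forces $|S_\infty \setminus S_0| < \infty$ and $S_t$ to be eventually constant equal to some bounded $B$.

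The martingale input is the total mass $M_t := \int_{\R^d} X_t(x)\,dx$: applying \eqref{Gener_ModSLFV_Red} with $\psi \equiv 1$ (after localising to a large ball) yields a vanishing compensator, so $M_t$ is a non-negative martingale bounded in $L^1$, converging a.s.\ to a finite $M_\infty$. The crucial geometric identity is that at each growth event,
\begin{equation*}
\Delta M_t \;=\; U\Bigl(|B(c,R)| - \int_{B(c,R)} X_{t-}(x)\,dx\Bigr) \;\geq\; U\,\bigl|B(c,R) \setminus S_{t-}\bigr|,
\end{equation*}
so positive jumps of $M_t$ dominate the volumetric growth of $S_t$. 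I expect the main obstacle to lie here: because $M_t$ is only a martingale and not of finite variation, its positive jumps are not a priori summable. My plan is to partition a deterministic neighbourhood of $S_0$ into small bounded cells and attach to each cell an auxiliary non-negative process -- a localised analogue of $M_t$ weighted to respect the cell geometry -- whose dynamics admits a per-cell super-martingale bound exploiting the negative drift induced by the ambient Blue region. Since any single reproduction event perturbs only finitely many cells, a per-cell bound on growth events together with summation delivers $\int_0^\infty \lambda_t \, dt < \infty$. This is exactly where the ``adequate objects'' mentioned in the introduction enter, and the argument is delicate because the naive candidate processes are not supermartingales.

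For the second claim, once $S_t$ has stabilised to the bounded $B$ past time $T$, every Poisson event with centre $c \in B^R \setminus B$ is forced to be Blue-parent (since $X_{t-}(c) = 0$ there) and multiplies $X_t$ on $B(c,R) \cap B$ by $1-U$. These boundary contractions arrive at the uniformly positive rate $|B^R \setminus B|$, and combined with the continued martingale convergence of $M_t$ restricted to $B$, they force $M_t \to 0$ almost surely. The desired uniform statement $\sup_{z \in B} X_t(z) \to 0$ then follows by exploiting that $X_t$ is piecewise constant on a locally finite partition of $B$ induced by the historical events, so that $L^1$-decay of $X_t$ together with Borel--Cantelli-type control of the boundary contractions upgrades to $L^\infty$-decay.
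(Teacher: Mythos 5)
Your setup is sound and matches the paper's in outline: a forward pathwise construction from $\Pi$ restricted to events whose ball meets the current support, the observation that the support is non-decreasing because $U<1$, the reduction to ``only finitely many Red-parent events occur'', and the total mass $M_t=\int X_t$ as a nonnegative convergent martingale. But the heart of the proof is missing, and you have located the hole yourself. Two problems. First, your volumetric inequality $\Delta M_t \geq U\,|B(c,R)\setminus S_{t-}|$ at growth events, even if its left-hand sides were shown summable, would only bound the total \emph{volume} added to the support; it would not rule out infinitely many growth events each adding vanishingly small volume, so it cannot by itself yield a finite stabilisation time $T$ with $\mathrm{Supp}(X_t)=B$ exactly for $t>T$. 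Second, the proposed repair --- per-cell localised masses with a supermartingale bound from ``the negative drift induced by the ambient Blue region'' --- is not substantiated and is unlikely to work as stated: the martingale property of $M$ comes from the global symmetry $c\in B(z,R)\Leftrightarrow z\in B(c,R)$, and the mass restricted to a cell has no sign on its drift in general (mass flows between cells in both directions), so there is no obvious supermartingale to exploit.

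The paper's mechanism, which is absent from your proposal, is the \emph{forbidden region} argument. Writing $\Phi_n(x)=\int_{B(x,R)}Y_n$, the embedded-chain increment is exactly $M_{n+1}-M_n=U(\varepsilon_{n+1}V(R)-\Phi_n(C_{n+1}))$, so once the martingale increments are smaller than $\alpha$ the sampled centre must satisfy $\Phi_n(C_{n+1})<\alpha/U$ (Blue parent) or $\Phi_n(C_{n+1})>V(R)-\alpha/U$ (Red parent); i.e.\ the centre is barred from the set $F_n$ where $\Phi_n$ is intermediate. A Lipschitz bound $\Phi_n(y)-\Phi_n(x)\leq S(R)\|y-x\|$ (proved by moving a ball of radius $R$ along a segment) together with the fact that $\Phi_n$ vanishes at distance $>R$ from the support shows that, as long as Red parents keep occurring, $|F_n|\geq\psi>0$ for a deterministic $\psi$. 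Since the centre is uniform on $\Delta_n^R$, whose volume grows at most linearly in $n$, the probability of avoiding a region of volume $\psi$ forever is bounded by $\prod_j(1-\psi/(|\Delta_0^R|+jV(2R)))=0$. This gives finitely many \emph{positive sampling events} outright (stronger than finitely many growth events), which yields both the stabilisation of the support and, since every subsequent event multiplies $X_t$ by $1-U$ on its ball, the decay $\sup_{z\in B}X_t(z)\to 0$. Without some substitute for this chain of ideas --- small increments constrain the sampling location, Lipschitz continuity of the local average gives a volume lower bound, and uniform sampling on a slowly growing set forces a hit --- your argument does not close.
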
	

			        \begin{Remark}
			        	For the sake of clarity, we chose $X_0$ to be deterministic. The result would still be true if 
			        	$\E[|\text{Supp}(X_0)|]<\infty$, see Remark \ref{Rmk_Delta0_Random}.
			        \end{Remark}
			        
			        The proof of this result is the objective of this paper. It is fairly challenging because of the large dimension of the
			        state space. Although the result is stated for the support of $X_t$, the actual object we need to keep track of
			        is the whole function $X_t$. The natural approach consisting in approximating probabilities of trajectories that correspond to the event
			        in question simply do not work, because such approximations waste too much information about the process.
			        Our approach is to first summarise the structure
			        of the process in a useful way. This is why we build adequate geometric tools that allow us to use a powerful martingale argument.\\
					
					Fundamentally, the cause of the behaviour described in Theorem \ref{Main_Result} lies in the discrete nature of the jumps inherent to $\Lambda$FV processes, rather than in the
					geometry of the S$\Lambda$FV process. 
					
					To see that, we consider the simpler process $(\hat{Z}_t)_{t \geq 0}$ where there is no space, that is to say at every reproduction event, 
					the parent is sampled with probability $\hat{Z}_t$, and
					a proportion $U$ 
					of the whole population is replaced by the offspring of the parent.
					In the notation of \cite{BERTOIN_LEGALL_2003},
					the process $(\hat{Z}_t)_{t \geq 0}$ is the $\Lambda$FV process 
					on the state-space $K=\{Red,Blue\}$ with
					$\Lambda$-measure $\Lambda(du)=u^2 \delta_U (du)$ .
					It is a continuous time Markov Chain on $[0,1]$ with constant intensity, and the transitions of its embedded
					discrete time Markov Chain $(Z_n)_{n \geq 0}$ are given by
					\begin{align*}
						\left\{
						\begin{aligned}
							&
							Z_{n+1}=(1-U)Z_n+U \, \varepsilon_{n+1},
							\\
							&
							\varepsilon_{n+1} \, | \, Z_n \, \thicksim \, \text{B}(Z_n),
						\end{aligned}
						\right.
					\end{align*}
					where $\text{B}(Z_n)$ is a Bernoulli distribution with parameter $Z_n$.
					It is straightforward to show that $(Z_n)_{n \geq 0}$ is a nonnegative martingale, 
					and 
					therefore converges almost surely. As a consequence, $\varepsilon_n$ 
					converges 
					almost
					surely to $0$ or $1$.
					This means that after some
					finite random time, $\varepsilon_n$ will remain constant equal to $0$
					or $1$. Almost surely in finite time, 
					either
					the $Red$ or the $Blue$ population will be the only one to keep 
					reproducing. 
					
					This remarkable feature is due to the fact that if the frequency $Z_n$ is 
					not sampled 
					a few
					times in a row, it is going to decrease geometrically, and becomes rapidly too small 
					to be 
					sampled again. The same reasoning applies to the $Blue$ population.\\
					
					As we will prove, the same mechanism takes place in the spatial model, that is
					after some almost surely finite random time, the $Red$ population will stop reproducing.
					
		\subsection{Proofs and outline}

					The martingale argument we demonstrated in the previous section seems to be the most promising approach.
					However, to be able to use such an argument, we need to find a way to filter out all the complex dependencies introduced by space,
					which is the main challenge in this work. We solved this problem by introducing in \S \ref{Forbid_Reg} the geometrical object 
					of \emph{forbidden region} that 
					 allows	to connect the martingale convergence to the sampling of the $Red$ population.
					
					The rest of this article
					is devoted to the proof
					of Theorem \ref{Main_Result}, as well as the
					construction of the process $(X_{(t)})_{t\geq0}$ defined by (\ref{Gener_ModSLFV_Red}).
					
					In Section \ref{Sec_DTMC}, we introduce a discrete time Markov Chain $(Y_n)_{n \geq 0}$ which is the discrete time
					equivalent of $(X_{(t)})_{t\geq0}$.
					This chain is going to be used to construct $(X_{(t)})_{t\geq0}$ and to prove 
					Theorem \ref{Main_Result}.
					 We state in Proposition \ref{Conv_Delta_n} the equivalent of Theorem \ref{Main_Result} in discrete time.

					 Section \ref{Sec_Geometry} provides a toolbox that allows to handle easily the geometry of the model.
					 
					 We prove in Section \ref{Finit_Sampl}
					 the central Proposition \ref{Prop_Finit_Many_Sampl}, which states that in discrete time, the $Red$		 					 population defined by $Y_n(\cdot)$ is sampled 
					 from only finitely many times. The proof relies on the fact that the total mass of the population, i.e. the integral of $Y_n(\cdot)$
					 over $\R_d$, is a martingale that converges almost surely. In \S \ref{Forbid_Reg}, we introduce the crucial concept of forbidden region, and use it to 
					 prove Proposition \ref{Prop_Finit_Many_Sampl}.
					 
					 We gather all the results in Section \ref{Prf_Thrm}	. Proposition \ref{Prop_Finit_Many_Sampl}
					 allows both to use $(Y_n)_{n\geq0}$ to construct $(X_t)_{t\geq0}$ as a non explosive continuous time Markov Chain, and to prove
					 Theorem \ref{Main_Result}.
					 
					 We finally conclude by discussing some extensions of this work.

	\section{A discrete time Markov chain}
	\label{Sec_DTMC}
		
		\subsection{Construction}

				\begin{Definition}
					\label{Def_Yn}
					Consider $R>0$ and $0<U<1$. Let $y$ be an 
					$\mathcal{S}_c$-valued random 
					variable.
					We construct simultaneously random sequences $(C_n)_{n \geq 
					1}$
					and $(V_n)_{n \geq 1}$, a filtration $(\mathcal{P}_n)_{n \geq 
					0}$, 
					and an $\mathcal{S}_c$-valued Markov chain $(Y_n)_{n\geq0}$,
					using the following recurrence:
					\begin{align}
						\left\{
						\begin{aligned}
							&
							Y_0=y,
							\\
							&
							\mathcal{P}_0:=\sigma(Y_0),
							\\
							&
							\Delta_0=\text{Supp}(Y_0).
						\end{aligned}
						\right.
					\end{align}
					and for $n\geq0$,
					\begin{itemize}
						\item $\mathcal{P}_n:=
							\sigma(C_1,\dots,C_n,V_1,\dots,V_n,
							Y_0,\dots,Y_n)$,
						\item conditionally on $\mathcal{P}_n$, $C_{n+1}$ is uniform 
							on $\Delta_n^{R}$,
						\item $V_{n+1}$ is distributed uniformly on $[0,1]$, 
						  independently from 
							$\mathcal{P}_n$ and $C_{n+1}$, 
						\item $Y_{n+1}$ is given by the formula
							\begin{align}
				  		\label{Dyn_Yn}							
								Y_{n+1}(\cdot)=Y_n(\cdot)+U \, \delta_{B(C_{n+1},R)}(\cdot) \, 
								\bigl( \mathds{1}_{ \{ 
				  			V_{n+1} \leq 	Y_n(C_{n+1})\} } -Y_n(\cdot)	\bigr)
				  		\end{align}
				  	\item $\Delta_{n+1}=\text{Supp}(Y_{n+1})$.				
		  		\end{itemize}
				  We introduce the notation $\varepsilon_{n+1}:=\mathds{1}_{ \{ 
				  V_{n+1} \leq 	 Y_n(C_{n+1})\} }$.
				  In 
				  particular, the trajectories of $(\Delta_n)_{n \geq 0}$ are 
				  given by
	        \begin{equation}
	        	\Delta_{n}=\Delta_{0} \cup  
	        	\bigcup_{
	        		\substack{
	        			1\leq k \leq n,\\
	        			\varepsilon_{k}=1	}
	        		}
	        	 B(C_{k},R).
	        	 \label{Dyn_Clust}
	        \end{equation}
	        Finally, we denote the natural filtration of $(Y_n)_{n\geq0}$ 
	        by
					$\mathcal{G}_n:=\sigma(Y_0,\dots,Y_n)$.
				\end{Definition}				
				
				\begin{Remark}
					We recall that $\Delta_n^R$ is the $R$-expansion of the set 
					$\Delta_n$.
				\end{Remark}
				
			  At each 
			  reproduction event, the random variable $C_{n+1}$
			  corresponds to the centre of the event. The parent is sampled uniformly
			  at location $C_{n+1}$ thanks to the random variable $V_{n+1}$, so that the parent is $Red$ with probability
			  $Y_n(C_{n+1})$, and $Blue$ with probability $1-Y_n(C_{n+1})$.
			  The constants $R$ and $U$ are the radius of the 
			  event and the proportion of the population that is modified. 
			  The 
			  random variable $\varepsilon_{n+1}$ indicates the types of the 
			  parent chosen. 
			  
			  \begin{Notation}
				  If
				  $\varepsilon_{n+1} =1$, we say that the event is a 
				  \emph{positive 
				  sampling event}, because
				  the total red population increases, whereas when 
				  $\varepsilon_{n+1} 
				  =0$ we say that it 
				  is a \emph{negative sampling event}.
				\end{Notation}
			  Equation (\ref{Dyn_Clust}) shows that if the cluster $\Delta_n$ 
			  is to increase, the 
			  minimum requirement is that there is a positive sampling.\\

        \begin{Remark}
        	\label{Delta_Expands}
        	Expression (\ref{Dyn_Clust}) is true because we assumed
        	$U<1$. In this case, the support and the range of the process 
					coincide. Once a region 
					is occupied by the \emph{Red} population it remains occupied 
					at every finite time. Therefore the cluster $\Delta_n$ never 
					shrinks. In the case where $U=1$ expression (\ref{Dyn_Clust}) 
					would remain true if $\Delta_n$ was defined to be the range of the process,
					that is $\bigcup_{n \geq 0} \text{Supp}(Y_n)$.
        \end{Remark}
		
		\subsection{Result of the cluster convergence}
				
				The following result is the expression of our main result
				in the discrete time setting, with the temporary technical condition 
				$Y_0=a\, \delta_{B(C_0,r_0)}$. This condition is removed in Proposition
				\ref{Conv_Delta_n_General} by allowing $Y_0$ to be any deterministic
				function with bounded support.
				
				\begin{Proposition}
					\label{Conv_Delta_n}
					Suppose $Y_0=a\, \delta_{B(C_0,r_0)}$, where 
					$a \in [0,1]$, $r_0>0$ and 
 					$C_0 \in \mathbb{R}^d$.
					Then, there exists an almost surely finite 
					random time $\kappa$ such that
					\begin{align}
						\forall n > \kappa, \quad \varepsilon_n=0.
					\end{align}
					Therefore, there exists an
					almost surely 
					bounded random set $B \in \mathbb{R}^d$ such that
					\begin{align}
						\forall n > \kappa, \quad \Delta_n=B.
					\end{align}
				\end{Proposition}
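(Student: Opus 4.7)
The plan is to run a martingale argument on the total mass
$M_n := \int_{\R^d} Y_n(y)\,dy$. First I would verify that $(M_n)_{n\ge 0}$
is a nonnegative $(\mathcal{P}_n)$-martingale. Using (\ref{Dyn_Yn}), the
increment is $M_{n+1}-M_n = U V_R\,\varepsilon_{n+1} - U\int_{B(C_{n+1},R)} Y_n(y)\,dy$,
where $V_R$ denotes the volume of a ball of radius $R$. Because $C_{n+1}$ is
uniform on $\Delta_n^R$ and $\varepsilon_{n+1}\mid\mathcal{P}_n,C_{n+1}$
is Bernoulli with parameter $Y_n(C_{n+1})$, taking conditional
expectation reduces to comparing $\int_{\Delta_n^R} Y_n(c)\,dc$ and
$\int_{\Delta_n^R}\!\int_{B(c,R)} Y_n(y)\,dy\,dc$ via Fubini; these
integrals agree because for any $y\in\Delta_n$ (the only points that
contribute) one has $B(y,R)\subset\Delta_n^R$, so the second integral
equals $V_R\int_{\Delta_n} Y_n(y)\,dy = V_R M_n$. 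Combined with
$M_0 = a\,|B(C_0,r_0)|<\infty$, martingale convergence delivers
$M_n\to M_\infty$ a.s.\ with $M_\infty<\infty$ a.s., and hence
$M_{n+1}-M_n\to 0$ a.s.

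Next I would translate this analytic statement into geometric
consequences. On $\{\varepsilon_{n+1}=1\}$ the increment equals
$U\int_{B(C_{n+1},R)}(1-Y_n(y))\,dy \ge U\,|B(C_{n+1},R)\setminus\Delta_n|$,
so eventually every positive sampling must be contained in the existing
cluster and $|\Delta_n|$ stays bounded a.s. This already defeats the
\emph{oil film} heuristic from the introduction, but it does not yet
stop positive samplings altogether: they could keep occurring inside a
stabilised $\Delta_n$, balanced by negative samplings, without shutting
off.

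The hard part is ruling out this persistent sampling, and this is where
I would invoke the forbidden region construction announced in
\S\,\ref{Forbid_Reg}. A natural shape for such an object is a
$\mathcal{P}_n$-measurable set $F_n\subset\Delta_n^R$ engineered so
that every positive sampling with $C_{n+1}\in F_n$ produces a mass
increment bounded below by some fixed $\delta>0$; since
$M_{n+1}-M_n\to 0$, only finitely many such samplings can occur. The
construction must further be arranged so that on the complement
$\Delta_n^R\setminus F_n$ positive samplings only happen in
configurations where $Y_n$ is very close to $1$ on a whole ball, at
which point a conditional Borel--Cantelli argument built on the small
residual sampling probability
$P(\varepsilon_{n+1}=1,\,C_{n+1}\notin F_n\mid\mathcal{P}_n)$ closes the
estimate. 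I expect this to be the principal obstacle, since both
$\Delta_n$ and the super-level structure of $Y_n$ evolve jointly and
randomly; the forbidden region has to be large enough to absorb the
high-impact samplings while its complement still yields a summable
sampling probability in $n$.

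Granting finitely many positive samplings, the stated conclusion is
immediate: by (\ref{Dyn_Clust}), $\Delta_n$ changes only at positive
samplings, so if $\kappa$ denotes the index of the last one, then
$\Delta_n=\Delta_\kappa=:B$ for every $n>\kappa$, and $B$ is bounded as
the union of $\Delta_0$ with finitely many balls of radius $R$.
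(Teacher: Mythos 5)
Your opening is exactly the paper's: $M_n=\int_{\R^d}Y_n$ is a nonnegative martingale (your Fubini verification is the content of Proposition~\ref{Mart}), so $M_n$ converges a.s.\ and $M_{n+1}-M_n\to0$, with the increment identity $M_{n+1}-M_n=U\bigl(\varepsilon_{n+1}V(R)-\Phi_n(C_{n+1})\bigr)$ of Lemma~\ref{Mass_Change}, where $\Phi_n(x)=\int_{B(x,R)}Y_n$. You also correctly reduce the Proposition to showing that only finitely many positive samplings occur, and your final paragraph deducing the stabilisation of $\Delta_n$ from (\ref{Dyn_Clust}) is fine. (A side remark: your claim that $|\Delta_n|$ stays bounded already follows from $M_{n+1}-M_n\to0$ is not justified --- the sum of the positive increments is not controlled by $M_\infty-M_0$ unless the negative increments are summable --- but you do not use it.)

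The genuine gap is the middle of the argument, which you explicitly defer. The paper's forbidden region is the concrete set $F_n=\{x:\ \alpha/U\le\Phi_n(x)\le V(R)-\alpha/U\}$ for a fixed $0<\alpha<UV(R)/2$; after the a.s.\ finite time $\tau_\alpha$ beyond which all increments are $<\alpha$, Proposition~\ref{Constraint_C} forces $C_{n+1}\notin F_n$ --- this much you anticipated. What is missing are the two facts that make perpetual avoidance impossible. First, a \emph{deterministic volume lower bound} $|F_j|\ge\psi>0$ whenever $\sup\Phi_j>V(R)-\alpha/U$ (which must hold for all large $j$ if $\varepsilon_k=1$ infinitely often). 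This is the real work of Section~\ref{Sec_Geometry}: one proves the Lipschitz-type estimate $\Phi_n(y)-\Phi_n(x)\le\|y-x\|\,S(R)$ (Proposition~\ref{Mean_Val}, via piecewise differentiability of $t\mapsto\Phi_n(x_t)$ along segments), so every ray from a point where $\Phi_j$ is near $V(R)$ out to the region where $\Phi_j=0$ crosses $F_j$ along a segment of length at least $(V(R)-2\alpha/U)/S(R)$, and integrating over directions gives $|F_j|\ge\psi:=V\bigl((V(R)-2\alpha/U)/S(R)\bigr)$. Second, your proposed closure by a Borel--Cantelli argument with a ``summable residual sampling probability'' points in the wrong direction: since $C_{j+1}$ is uniform on $\Delta_j^R$ and $|\Delta_j^R|\le|\Delta_0^R|+jV(2R)$ grows at most linearly, the conditional probability of avoiding $F_j$ is at most $1-\psi/(|\Delta_0^R|+jV(2R))$; the hit probabilities are of order $1/j$, hence \emph{not} summable, and it is precisely this divergence that makes $\prod_{j\ge l}\bigl(1-\psi/(|\Delta_0^R|+jV(2R))\bigr)=0$ and annihilates the event $\bigcap_{j\ge l}\{C_{j+1}\notin F_j,\ |F_j|\ge\psi\}$ (Propositions~\ref{UBoundProb} and~\ref{Prop_Finit_Many_Sampl}). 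Without the gradient bound on $\Phi_n$, the uniform constant $\psi$, and this product estimate, your argument does not close.
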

				Most of the remainder of this paper is devoted to proving this 
				Proposition. We first 
				investigate the geometric properties of the model in the next section.

	\section{Geometry}
	\label{Sec_Geometry}

				This section constructs all the tools that allow us to manage the geometry of the process.
				
        \begin{Remark}
        	From now on, unless specified otherwise, we suppose that $Y_0=a\, \delta_{B(C_0,r_0)}$.
        \end{Remark}
        
		\subsection{$Y_n$ is piecewise constant}
		
				\begin{Definition}
					For notational convenience, we introduce the sequence\\ 
					$(R_n)_{n \geq 0}$ such that
					\begin{align}
						\left\{
						\begin{aligned}
							& R_0=r_0,
							 &
							\\
							& R_n=R,
							 & n \geq 1.
						\end{aligned}
						\right.
					\end{align}
				\end{Definition}
				
 				\begin{Lemma}
 				\label{Struct_Yn}
 					For every $n \geq 0$, for every 
 					$\zeta \subset \{0,\dots,n\}$,
 					consider the set $A_{n,\zeta}$ defined by
 					\begin{align}
 					\label{Struct_A}
 						A_{n,\zeta}:=
				  	\big( \bigcap_{m \in \zeta} B(C_{m},R_{m}) \big)
				  	 \, \diagdown \, \big( \bigcup_{\substack{j \leq n,\\ j \notin \zeta}}
				  	B(C_{j},R_{j}) \big).
 					\end{align}
 					The function $Y_n$ can be written as
				  \begin{align}
				  	\label{Spat_Struct}
				  	Y_n=\sum_{\zeta \subset \{0,\dots,n\}}
				  	\alpha_{n,\zeta} \, \delta_{A_{n,\zeta}},
				  \end{align}
 					where the sets $A_{n,\zeta}$ are all disjoint for a given $n$,
 					and $\alpha_{n,\zeta} \geq 0$.
				\end{Lemma}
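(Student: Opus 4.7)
The plan is to proceed by induction on $n$. For the base case $n = 0$, the initial data $Y_0 = a\,\delta_{B(C_0, r_0)}$ matches the claimed form with $\alpha_{0, \{0\}} = a$ and $A_{0, \{0\}} = B(C_0, R_0)$, together with $\alpha_{0, \emptyset} = 0$ (so that the standard convention $\bigcap_{m \in \emptyset} B(C_m, R_m) = \R^d$ causes no issue in defining $A_{0, \emptyset}$).

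For the inductive step, I would first rewrite the recurrence (\ref{Dyn_Yn}) case by case: off the ball $B(C_{n+1}, R)$ one has $Y_{n+1} = Y_n$, while on the ball one has $Y_{n+1} = (1-U) Y_n + U \varepsilon_{n+1}$. Applied to the inductive decomposition $Y_n = \sum_{\zeta \subset \{0, \ldots, n\}} \alpha_{n, \zeta}\, \delta_{A_{n, \zeta}}$, this splits every piece $A_{n, \zeta}$ into the two parts $A_{n, \zeta} \setminus B(C_{n+1}, R)$ and $A_{n, \zeta} \cap B(C_{n+1}, R)$, which by direct inspection of (\ref{Struct_A}) are precisely $A_{n+1, \zeta}$ and $A_{n+1, \zeta \cup \{n+1\}}$. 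Reading off the coefficients, I would get
\begin{align*}
\alpha_{n+1, \zeta} &= \alpha_{n, \zeta}, \\
\alpha_{n+1, \zeta \cup \{n+1\}} &= (1-U)\,\alpha_{n, \zeta} + U \varepsilon_{n+1},
\end{align*}
for $\zeta \subset \{0, \ldots, n\}$, giving the decomposition of $Y_{n+1}$ indexed by all subsets of $\{0, \ldots, n+1\}$.

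The two remaining sanity checks are essentially free. Disjointness of the $A_{n, \zeta}$ for distinct $\zeta$ holds because each $x \in \R^d$ determines a unique \emph{signature} $\zeta(x) := \{ m \leq n : x \in B(C_m, R_m) \}$, and $x \in A_{n, \zeta}$ iff $\zeta = \zeta(x)$. Nonnegativity of the coefficients is preserved by the recurrence, since $(1 - U)\alpha_{n, \zeta} + U \varepsilon_{n+1} \geq 0$ whenever $\alpha_{n, \zeta} \geq 0$, using $U \in [0, 1)$ and $\varepsilon_{n+1} \in \{0, 1\}$. I do not anticipate any serious obstacle: the lemma is a bookkeeping statement asserting that a piecewise-constant function supported on Boolean combinations of a finite ball family remains of this form under an affine update on one additional ball, and the only care needed is in keeping the indexing straight.
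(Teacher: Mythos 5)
Your proof is correct and follows essentially the same route as the paper's: an induction on $n$ driven by the decomposition $A_{n,\zeta} = A_{n+1,\zeta} \,\cup\, A_{n+1,\zeta\cup\{n+1\}}$ that the new ball $B(C_{n+1},R)$ induces on each level set. Your version is in fact slightly more streamlined, since you track the coefficients $\alpha_{n,\zeta}$ explicitly through the update and obtain disjointness directly from the signature map $x\mapsto\zeta(x)$, whereas the paper runs three separate inductions (for the union identity, the vanishing of $Y_n$ off that union, and the constancy of $Y_n$ on each piece).
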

				
				\begin{Remark}
					By construction, $\forall z \in A_{n,\zeta}$, we have $Y_n(z)=\alpha_{n,\zeta} $.
				\end{Remark}
				
		  		
				\begin{figure}
					\includegraphics{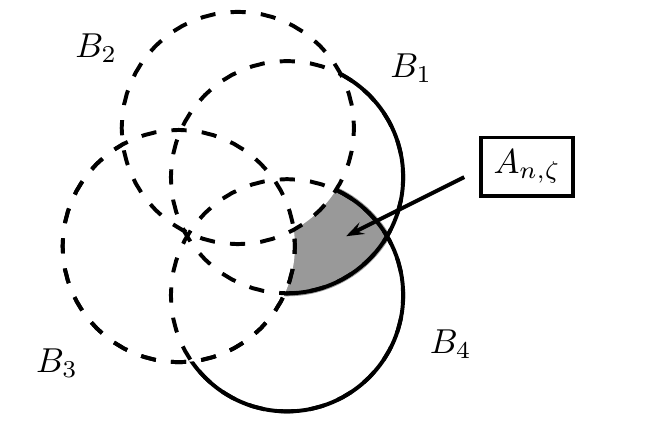}
					\caption{Structure of the level sets $A_{n,\zeta}$ defined in ($\ref{Struct_A}$), with $n=4$ and $\zeta:=\{1,4\}$. We use the notation $B_j:=B(C_j,R_j)$.}
					\label{Fig_Structure_Levels_Y}
				\end{figure}

				\begin{proof}
					We first introduce the shorter notation
					$$B_j:=B(C_{j},R_{j}), \ j \geq 0.$$
					The fact that the sets $A_{n,\zeta}$ are all disjoint for a given $n$
					is straightforward, so we just need to prove (\ref{Spat_Struct}).
					But before that, we need to show that
					\begin{align}
					\label{Union}
						\bigcup_{\zeta \subset \{0,\dots,n\}} A_{n,\zeta}
						=
						\bigcup_{i=1}^{n} B_i,
					\end{align}
					and we proceed by induction on $n$.
					The statement is true for $n=0$.
					Suppose now that it is true for some given $n \geq 0$. 
					Let $\zeta' \subset \{0,\dots,n+1\}$.
					\begin{itemize}
						\item[--] If $\zeta'=\{n+1\}$, then $A_{n+1,\zeta'}=B_{n+1}
							 \, \diagdown \, \bigcup_{\zeta \subset \{0,\dots,n\}} A_{n,\zeta}.$
	
						\item[--] If $(n+1) \in \zeta'$, and $\zeta' \neq \{n+1\}$
							then there exists $\zeta \subset \{0,\dots,n\}$
							such that 
							$$A_{n+1,\zeta'}=B_{n+1} \cap A_{n,\zeta}.$$
						
						\item[--] If $(n+1) \notin \zeta'$,
							then there exists $\zeta \subset \{0,\dots,n\}$
							such that 
									$$A_{n+1,\zeta'}=A_{n,\zeta} \, \diagdown \, B_{n+1}.$$
					\end{itemize}
					Therefore, we see that 
					\begin{align*}
						\bigcup_{\zeta' \subset \{0,\dots,n+1\}} A_{n+1,\zeta'}
						= 
						\, &
						\Big(
						\bigcup_{\zeta \subset \{0,\dots,n\}} B_{n+1} \cap A_{n,\zeta}
						\Big) \,
						\cup \,
						\Big(
						\bigcup_{\zeta \subset \{0,\dots,n\}} A_{n,\zeta} \, \diagdown \, B_{n+1}
						\Big)
						\\[7pt]
						&
						\quad \quad \quad \quad \quad \quad \quad \quad 
						\quad \quad \quad
						\cup
						\Big(
						 B_{n+1} \, \diagdown \, 
						\bigcup_{\zeta \subset \{0,\dots,n\}} A_{n,\zeta}
						\Big)
						\\[7pt]
						=
						& \,
						\Big(
						\bigcup_{\zeta \subset \{0,\dots,n\}} A_{n,\zeta}
						\Big)
						\cup
						\Big(
						 B_{n+1} \, \diagdown \, 
						\bigcup_{\zeta \subset \{0,\dots,n\}} A_{n,\zeta}
						\Big)
						\\[8pt]
						=
						& \,
						B_{n+1} 
						\cup
						\Big(
						\bigcup_{\zeta \subset \{0,\dots,n\}} A_{n,\zeta}
						\Big),
					\end{align*}
					and the statement is proven using the inductive hypothesis.\\
					
					We can return to the proof of expression (\ref{Spat_Struct}). We need to show 
					that
					for all $n \geq 0$, $\zeta \subset \{0,\dots,n\}$,
					\begin{align}
					\label{Union_An}
						&
						\Big( 
						x \notin \bigcup_{\zeta \subset \{0,\dots,n\}} A_{n,\zeta} 
						\Big)
						\text{ implies }
						\Big(
						Y_n(x)=0
						\Big),
						\\
						\nonumber
						\text{and }
						&
						\\
						\label{xy_in_An}
						&
						\Big( x,y \, \in A_{n,\zeta} \Big)
						\text{ implies }
						\Big(
						Y_n(x)=Y_n(y)
						\Big).
						\\
						\nonumber
					\end{align}
					
					We prove (\ref{Union_An}) by induction on $n$, and we use (\ref{Union}).
					Statement (\ref{Union_An}) is satisfied for $n=0$  because
					$Y_0=a\, \delta_{B(C_0,r_0)}$.
					Suppose now that it is true 
					for some $n \geq 0$, and consider $x \notin \cup_{i=1}^{n+1} B_i$. In
					particular, 
					$x \notin B_{n+1}$, and using the dynamics equation (\ref{Dyn_Yn}),
					we find that
						$Y_{n+1}(x)=Y_n(x).$
					Because $x \notin \cup_{i=1}^{n} B_i$, we can use the inductive hypothesis, 
					and we obtain that $Y_{n}(x)=0$, which proves (\ref{Union_An}).
					
					To prove (\ref{xy_in_An}), we also use induction. It is true for $n=0$.
					Suppose (\ref{xy_in_An}) is satisfied
					for a given $n \geq 0$. Consider
					$\zeta' \subset \{0,\dots,n+1\}$, and take $x,y \in A_{n+1,\zeta'}$.
					\begin{itemize}
						\item[--] If $\zeta'=\{n+1\}$, then 
							$A_{n+1,\zeta'}=B_{n+1}
							 \, \diagdown \, \bigcup_{\zeta \subset \{0,\dots,n\}} A_{n,\zeta}$, and
							 in particular $$x,y \notin \bigcup_{\zeta \subset \{0,\dots,n\}} 
							 A_{n,\zeta}.$$
							We have $\delta_{B_{n+1}}(x)=\delta_{B_{n+1}}(y)=1$,
							and using (\ref{Union_An}) we see that
							$Y_n(x)=Y_n(y)=0$.

						\item[--] If $(n+1) \in \zeta'$ and $\zeta' \neq \{n+1\}$,
							there exists $\zeta \subset \{0,\dots,n\}$
							such that 
							$$A_{n+1,\zeta'}=B_{n+1} \cap A_{n,\zeta}.$$
							Therefore $\delta_{B_{n+1}}(x)=\delta_{B_{n+1}}(y)=1$,
							and using the inductive hypothesis, we have $Y_n(x)=Y_n(y)$.
						
						\item[--] If $(n+1) \notin \zeta'$,
							then there exists $\zeta \subset \{0,\dots,n\}$
							such that 
									$$A_{n+1,\zeta'}=A_{n,\zeta} \, \diagdown \, B_{n+1}.$$
							In this case $\delta_{B_{n+1}}(x)=\delta_{B_{n+1}}(y)=0$,
							and using the inductive hypothesis, we have $Y_n(x)=Y_n(y)$.
					\end{itemize}					
					We can express $Y_{n+1}$ using the dynamics equation (\ref{Dyn_Yn}),
					and we obtain:
					\begin{align*}
						\left\{
						\begin{aligned}
							&
							Y_{n+1}(x)=Y_n(x)+U \, \delta_{B_{n+1}}(x)
							\, (\varepsilon_{n+1}-Y_n(x))
							\\
							&
							Y_{n+1}(y)=Y_n(y)+U \, \delta_{B_{n+1}}(y)
							\, (\varepsilon_{n+1}-Y_n(y)).
						\end{aligned}
						\right.
					\end{align*}
					We have proved that for any choice of $\zeta'$, all the terms in the above 
					equations are the same for $x$ and $y$, therefore $Y_{n+1}(x)=Y_{n+1}(y)$,
					and we have proved (\ref{xy_in_An}).
				\end{proof}

		\subsection{Variation of the local average}

        A central tool for the rest of the work is
        the average of the function $Y_n$ on a ball of radius $R$ and 
        centre $x \in \mathbb{R}^d$. It is important that $R$ is the radius of the reproduction event,
        as this is what links the martingale introduced in the next section to the geometry of the process (see 
        Lemma \ref{Mass_Change}).
        We introduce the following function:
        
				\begin{Definition}
					\begin{align}
						\Phi_n(x):=\int_{B(x,R)} Y_n(z) \, dz.
					\end{align}
				\end{Definition}

				The main result of this section is the following.\\
			  \begin{Proposition}
			  	\label{Mean_Val}
			  	For every $x,y \in \mathbb{R}^d$,
			  	\begin{align}
			  		\Phi_n(y)-\Phi_n(x) \leq \|y-x\| S(R).
			  	\end{align}
			  \end{Proposition}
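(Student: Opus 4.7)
The plan is to reduce the statement to a purely geometric estimate on the Lebesgue measure of the asymmetric difference $B(y,R)\setminus B(x,R)$, and then establish that geometric estimate by comparison with an annulus.

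First I would rewrite
\[
\Phi_n(y)-\Phi_n(x)=\int_{\R^d}Y_n(z)\bigl(\1_{B(y,R)}(z)-\1_{B(x,R)}(z)\bigr)\,dz.
\]
Since $Y_n$ takes values in $[0,1]$ (this is built into the dynamics (\ref{Dyn_Yn}) as it is for $X_t$), we can discard the nonpositive contribution from $B(x,R)\setminus B(y,R)$ and bound $Y_n\le 1$ on $B(y,R)\setminus B(x,R)$, which yields
\[
\Phi_n(y)-\Phi_n(x)\;\le\;\bigl|\,B(y,R)\setminus B(x,R)\,\bigr|.
\]

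Second, the geometric step. If $\|y-x\|\le R$, the triangle inequality gives the inclusion $B\bigl(y,R-\|y-x\|\bigr)\subset B(x,R)$, since any $z$ with $\|z-y\|\le R-\|y-x\|$ satisfies $\|z-x\|\le\|z-y\|+\|y-x\|\le R$. Consequently $B(y,R)\setminus B(x,R)$ sits inside the annulus $B(y,R)\setminus B\bigl(y,R-\|y-x\|\bigr)$, whose volume is $\omega_d\bigl(R^d-(R-\|y-x\|)^d\bigr)$, where $\omega_d$ denotes the volume of the unit ball in $\R^d$. A one-line mean value estimate gives $R^d-(R-h)^d\le dR^{d-1}h$ for $h\in[0,R]$, and since $S(R)=d\omega_d R^{d-1}$ is the surface measure of the sphere of radius $R$, one obtains the desired bound $\|y-x\|\,S(R)$. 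The regime $\|y-x\|>R$ is handled trivially by $|B(y,R)\setminus B(x,R)|\le|B(y,R)|=\omega_d R^d<R\,S(R)\le\|y-x\|\,S(R)$.

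I do not anticipate a genuine obstacle here: the lemma is essentially a Lipschitz continuity statement for the convolution $\1_{B(0,R)}\ast Y_n$, and the work merely consists in making the Lipschitz constant explicit through the annulus comparison. The only care needed is combining the two ranges $\|y-x\|\le R$ and $\|y-x\|>R$ cleanly, and checking that the constant $S(R)$ that the lemma advertises really is the surface area factor coming from the derivative of $h\mapsto R^d-(R-h)^d$ at $h=0$.
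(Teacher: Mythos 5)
Your proof is correct, but it takes a genuinely different and substantially more elementary route than the paper. The paper parametrises the segment from $x$ to $y$, introduces $\Lambda_n^{x,y}(t)=\Phi_n(x_t)$, and spends considerable effort (decomposition into convex pieces $H_\zeta$, spherical coordinates about an interior point, a partition of the angular domain) establishing that this function is continuous and piecewise differentiable, so that the pointwise derivative bound $\frac{d}{dt}\Lambda_n^{x,y}\le S(R)$ --- itself obtained from the inclusion $B(x_{t+h},R)\subset B(x_t,R+h)$ and $Y_n\le 1$, i.e.\ essentially your annulus comparison applied infinitesimally --- can be integrated along the segment. You bypass the entire differentiability analysis by making the comparison globally: $\Phi_n(y)-\Phi_n(x)\le|B(y,R)\setminus B(x,R)|$ (using $0\le Y_n\le 1$), followed by the inclusion $B(y,R)\setminus B(x,R)\subset B(y,R)\setminus B(y,R-\|y-x\|)$ and the elementary estimate $\omega_d\bigl(R^d-(R-h)^d\bigr)\le d\omega_dR^{d-1}h=S(R)\,h$, with the trivial case $\|y-x\|>R$ handled separately. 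This yields exactly the advertised constant with far less machinery. The only caveat is contextual rather than mathematical: the paper's construction of $\Lambda_n^{x,y}$ and its continuity is not wasted effort there, since it is reused later (the intermediate value theorem applied to $\Lambda_j^{x,y}$ in the proof of Lemma \ref{Two_Points} is what produces the segment $[x_0,y_0]$ inside the forbidden region), so your shortcut proves this proposition but does not render that auxiliary function dispensable for the rest of the argument.
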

			
			$ $ \newline

			  The rest of this section is devoted to proving this inequality. 
			  For this we need to introduce some auxiliary functions.
			  
			  \begin{Definition}
			  	Given $x,y \in \mathbb{R}^d$, for every $n\geq0$, we define
			  	\begin{align}
			  	\label{Lambda_nxy}
			  		\Lambda_n^{x,y}: [0, \|y  -x\|] & \longrightarrow  
			  		[0,\infty)
			  		\\
			  		\nonumber
			  		 t
			  		 & 
			  		 \, \longmapsto 
			  		 \Lambda_n(t):=\Phi_n \Big( \,x+ t \,\frac{y-x}{\|y -x\|} \, 
			  		 \Big).
			  	\end{align}
			  \end{Definition}
			  The key property for the proof of Proposition \ref{Mean_Val} is 
			  the following.
			  \begin{Proposition}
			  	\label{Piece_Diff}
			  	$\Lambda_n^{x,y}$ is a continuous, piecewise differentiable 
			  	function.
			  	 Moreover, for every point $t$ where $\Lambda_n^{x,y}$ is 
			  	 differentiable,
			  	 we have:
			  	\begin{align}
			  	\label{Bound_D_Lambda}
			  		\frac{d \Lambda_n^{x,y}}{dt} (t) \leq S(R).
			  	\end{align}
			  \end{Proposition}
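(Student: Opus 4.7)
The plan is to reduce the claim to a translating-ball volume computation made tractable by the piecewise constant structure of $Y_n$. By Lemma \ref{Struct_Yn}, setting $u := (y-x)/\|y-x\|$ and $z(t) := x + tu$, one has
\begin{align*}
\Lambda_n^{x,y}(t) \;=\; \sum_{\zeta \subset \{0,\dots,n\}} \alpha_{n,\zeta} \, \bigl| B(z(t),R) \cap A_{n,\zeta} \bigr|,
\end{align*}
with the $A_{n,\zeta}$ pairwise disjoint and $\alpha_{n,\zeta} \in [0,1]$ (as $Y_n$ takes values in $[0,1]$). The proof reduces to analysing, as $t$ varies, finitely many intersection volumes between a translating ball and fixed Borel sets.

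For continuity, the map $t \mapsto \mathds{1}_{B(z(t),R)}$ is continuous in $L^1(\R^d)$, which gives continuity of each intersection volume and hence of $\Lambda_n^{x,y}$. For piecewise differentiability, observe that the boundary of each $A_{n,\zeta}$ is contained in the union of the finitely many spheres $\partial B(C_j,R_j)$, $j \leq n$. The moving sphere $\partial B(z(t),R)$ is tangent to any fixed $\partial B(C_j,R_j)$ only when $\|z(t)-C_j\| \in \{|R-R_j|,\, R+R_j\}$, which happens at finitely many values of $t$. Let $\mathcal{T}$ be the (finite) union of all such tangency times on $[0,\|y-x\|]$. On each connected component of $[0,\|y-x\|] \setminus \mathcal{T}$, the combinatorial type of $\partial B(z(t),R) \cap A_{n,\zeta}$ is stable and all relevant spherical boundaries intersect transversely.

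On such a smooth piece, the classical volume-variation formula for a domain translated with velocity $u$ yields
\begin{align*}
\frac{d}{dt} \bigl| B(z(t),R) \cap A_{n,\zeta} \bigr| \;=\; \int_{\partial B(z(t),R) \cap A_{n,\zeta}} u \cdot n(p) \, d\sigma(p),
\end{align*}
where $n(p) = (p - z(t))/R$ is the outward unit normal to the translating sphere. Summing over $\zeta$ and using $\alpha_{n,\zeta} \in [0,1]$ together with the disjointness of the $A_{n,\zeta}$, the positive contributions are bounded above by $(u \cdot n(p))^+$ integrated over the full sphere $\partial B(z(t),R)$, which is precisely $S(R)$ (the area of the orthogonal shadow of a sphere of radius $R$, equivalently the half-surface flux integral). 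This proves (\ref{Bound_D_Lambda}).

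The main obstacle is rigorously justifying the volume-variation formula when $B(z(t),R) \cap A_{n,\zeta}$ has only a piecewise smooth boundary, with corners along the codimension-two intersections of the moving sphere with the fixed spheres $\partial B(C_j,R_j)$. The standard way to handle this is to cover the smooth $(d-1)$-part of the boundary by local charts, apply the divergence theorem locally, and use the transversality afforded by working away from $\mathcal{T}$ to ensure that the corner set contributes nothing to the $(d-1)$-Hausdorff measure in the boundary integral.
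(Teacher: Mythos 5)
Your route is genuinely different from the paper's on both halves of the statement, and the difference matters most for the derivative bound. For (\ref{Bound_D_Lambda}) the paper does not use the level-set decomposition at all: it observes that $B(x_{t+h},R)\subset B(x_t,R+h)$ for $h\ge 0$, so that $0\le Y_n\le 1$ gives $\Lambda_n^{x,y}(t+h)-\Lambda_n^{x,y}(t)\le |B(x_t,R+h)|-|B(x_t,R)|=V(R+h)-V(R)$; dividing by $h$ yields the bound $V'(R)=S(R)$ at every point of differentiability. This one-line inclusion argument replaces your entire surface-flux computation and needs no regularity of the sets $A_{n,\zeta}$. Your flux argument, if completed, would in fact give a sharper constant: the integral of $(u\cdot n)^+$ over a sphere of radius $R$ is the $(d-1)$-volume of its equatorial disc (Cauchy's projection formula), which is at most $S(R)/2$, not ``precisely $S(R)$'' as you assert. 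The misidentification is harmless for the inequality, but it is an error.

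The genuine gap is in the piecewise-differentiability half, and you have flagged it yourself. The volume-variation formula $\frac{d}{dt}|B(z(t),R)\cap A_{n,\zeta}|=\int_{\partial B(z(t),R)\cap A_{n,\zeta}}u\cdot n\,d\sigma$ for the non-convex sets $A_{n,\zeta}$, and the claim that the exceptional set consists exactly of the tangency times, are asserted rather than proven; tangency is not the only degeneracy (for instance, if $z(t)$ passes through a centre $C_j$ with $R_j=R$, the map $t\mapsto\|z(t)-C_j\|$ has a kink at a non-tangency time), and ``the combinatorial type is stable'' is doing real work without justification. The paper circumvents all of this by first applying inclusion--exclusion to rewrite $\Lambda_n^{x,y}$ as a finite combination of the functions $H_\zeta(t)=|B(x_t,R)\cap\bigcap_{m\in\zeta}B_m|$, i.e.\ intersections of the moving ball with fixed \emph{convex} sets; convexity allows a spherical-coordinate parametrisation of the boundary about an interior point and a clean proof that each $H_\zeta$ is differentiable away from the two endpoints of its support. (That step destroys the sign and size control on the coefficients --- the $\beta_{n,\zeta}$ need not lie in $[0,1]$ --- which is precisely why the paper cannot run your flux estimate and resorts to the separate inclusion argument for the bound.) If you wish to keep your direct approach, the cheapest repair is to note that your $L^1$-continuity estimate in fact shows $\Lambda_n^{x,y}$ is Lipschitz, hence absolutely continuous and differentiable a.e.; combined with the inclusion argument for the a.e.\ derivative bound, that already suffices for Proposition \ref{Mean_Val}, at the cost of weakening ``piecewise differentiable'' to ``differentiable a.e.''
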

			  \begin{proof}
			  	We first prove that $\Lambda_n^{x,y}$ is continuous and that there is at most
			  	a finite number $J$ of points $t_1 < \dots < t_J$ at which $\Lambda_n^{x,y}$
			  	is not differentiable.
			  	Thanks to equation (\ref{Spat_Struct}) from Lemma 
			  	\ref{Struct_Yn},
			  	we see that $\Phi_n$ is given by
			  	\begin{align*}
			  		\Phi_n(x)&=\int_{B(x,R)} 
			  		\sum_{\zeta \subset \{0,\dots,n\}}
			  		\alpha_{n,\zeta} \delta_{A_{n,\zeta}}(z) \, dz
			  		\\
			  		&= 
			  		\sum_{\zeta \subset \{0,\dots,n\}}
			  		\alpha_{n,\zeta} |B(x,R) \cap A_{n,\zeta}|,
			  	\end{align*}
			  	therefore $\Lambda_n^{x,y}(t)$ is given by
			  	\begin{align}
 				\label{Struct_Lambda_n}
			  		\Lambda_n^{x,y}(t)&=
			  		\sum_{\zeta \subset \{0,\dots,n\}}
			  		\alpha_{n,\zeta} \, 
			  		|
			  		B\big( \,x+ t \,\frac{y-x}{\|y -x\|} \, 
			  		 ,R \big)
			  		 \cap A_{n,\zeta}|.
			  	\end{align}
			  	
			  	We simplify the notation by introducing 
			  	$B_j:=B(C_{j},R_{j}), \ j \geq 0$
			  	and
			  	$$x_t:=x+ t \, \frac{y-x}{\|y -x\|}.$$\\
			  	
			  	The definition (\ref{Struct_A}) of the sets $A_{n,\zeta}$ and the
			  	inclusion-exclusion formula allow us to prove that
			  	for each $\zeta \subset \{0,\dots,n\}$, there exists
			  	a $\beta_{n,\zeta} \in \R$ such that
			  	\begin{align*}
			  		\Lambda_n^{x,y}(t)=
			  		&
			  		\sum_{\zeta \subset \{0,\dots,n\}}
			  		\beta_{n,\zeta} \, 
			  		|
			  		B\big( \,x_t \, ,R \big)
			  		\cap 
			  		\big( \bigcap_{m \in \zeta} B_m \big)
			  		|.
			  	\end{align*}
			  	\begin{Remark}
					The main change with expression (\ref{Struct_Lambda_n}) is that now we are working with intersections of balls, which are convex,
					whereas the sets $A_{n,\zeta}$ are usually not. Also, we had the fact that $\alpha_{n,\zeta}$ is the value of the function $Y_n$
					on the set $A_{n,\zeta}$, and such an interpretation is lost for $\beta_{n,\zeta}$.
			  	\end{Remark}
			  	
			  	If we introduce the function $H_{\zeta}$ defined for each set
			  	$\zeta \subset \{0,\dots,n\}$ by
			  	\begin{align}
			  		\label{Function_H_zeta}
			  		H_{\zeta}:
			  		[0, \|y -x\|]
			  		&
			  		\longrightarrow
			  		\R
			  		\\
       		  t \
       		  &
       		  		\nonumber
		  		  \longmapsto
		  		  | \,
		  		  B(x_t,R) \cap 
		  		  \bigcap_{m \in \zeta} B_m
		  		  \, |,
			  	\end{align}
			  	then we can simply rewrite the function $\Lambda_n^{x,y}$ as 
			  	\begin{align}
 				\label{Struct_Lambda_n_Convex}
			  		\Lambda_n^{x,y}(t)=
			  		&
			  		\sum_{\zeta \subset \{0,\dots,n\}}
			  		\beta_{n,\zeta} \, 
			  		H_{\zeta}(t).
			  	\end{align}

		  		
				\begin{figure}
					\includegraphics{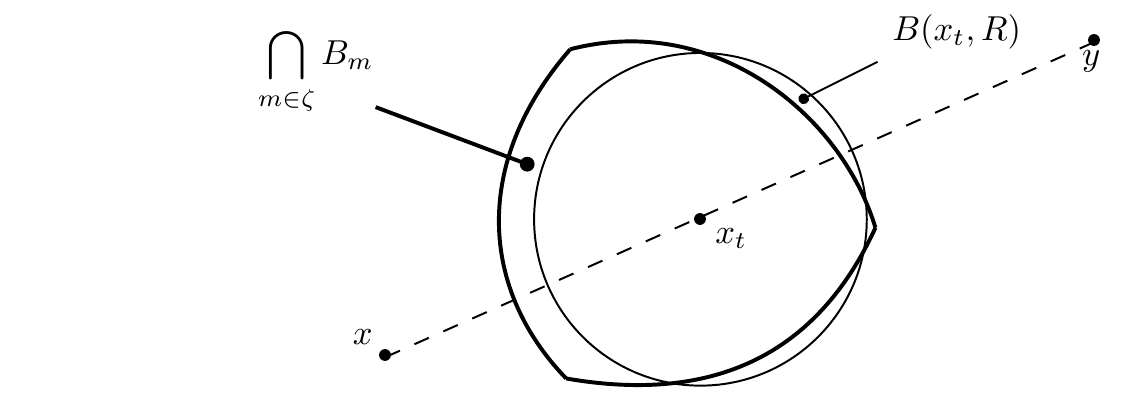}
					\caption{Visualisation of the function $H_{\zeta}$ defined in (\ref{Function_H_zeta}). The points $x$ and $y$ are fixed, as well as the balls 
					$B_m$, $m \in \zeta$, with $\zeta = \{1,2,3\}$ in this example.
					As the parameter $t$ varies between $0$ and $\|y -x\|$, the mobile point $x_t$ moves along the segment joining $x$ to $y$, with $x_0=x$ and 
					$x_{\|y -x\|}=y$. For a given $t$, $H_{\zeta}$(t) measures the volume of the intersection between the mobile ball $B(x_t,R)$ and the fixed set 
					$\bigcap_{m \in \zeta} B_m$. The function $\Lambda_{n}^{x,y}$ defined in (\ref{Lambda_nxy}) is expressed as a finite linear combination of such functions $H_{\zeta}$, see (\ref{Struct_Lambda_n_Convex})}
					\label{Fig_Function_Lambda_n_x_y}
				\end{figure}

			  	The continuity of $H_{\zeta}$ follows from the continuity of 
			  	the function $t \mapsto x_t$, and this shows that 
			  	$\Lambda_n^{x,y}$ is continuous.\\

			  	The set $\bigcap_{m \in \zeta} B_m$ is convex,
			  	therefore there exist $t_1$, $t_2$ 
			  	such that 
			  	\begin{align}
			  	\label{t1t2}
			  		H_{\zeta}(t)>0  \Leftrightarrow t_1<t<t_2.
			  	\end{align}
			  	Consider $t$ such that $t_1<t<t_2$. Thanks to (\ref{t1t2}), 
			  	this means we can choose a point $z$ belonging to 
			  	the 
			  	interior of
			  	$B(x_t,R) \cap \bigcap_{m \in \zeta} B_m$. 
			  	Because the set $B(x_t,R) \cap \bigcap_{m \in \zeta} B_m$ is convex, we can express its volume
			  	in $d$-dimensional spherical coordinates with $z$ as the new origin. Given angular coordinates
			  	$\phi:=(\phi_1,\dots,\phi_{d-1})$, we denote by $p_{\phi}(t)$ the unique point
			  	of the boundary of $B(x_t,R) \cap \bigcap_{m \in \zeta} B_m$ 
			  	with angular coordinates $\phi$, and $L(t,\phi)$ 
			  	the distance between $z$ and $p_{\phi}(t)$. We have:
			  	\begin{align*}
			  		H_{\zeta}(t)=
			  		\int_0^{\pi}
			  		\dots
			  		\int_0^{\pi}
			  		\int_0^{2 \pi}
			  		\int_0^{L(t,\phi)}
			  		r^{d-1}
			  		\sin^{d-2}(\phi_1)
			  		\sin^{d-3}(\phi_2)
			  		\dots
			  		\sin(\phi_{d-2})
			  		\\
			  		dr \, 
			  		d\phi_{d-1} \dots  d\phi_1.
			  	\end{align*}

				\begin{figure}
					\includegraphics{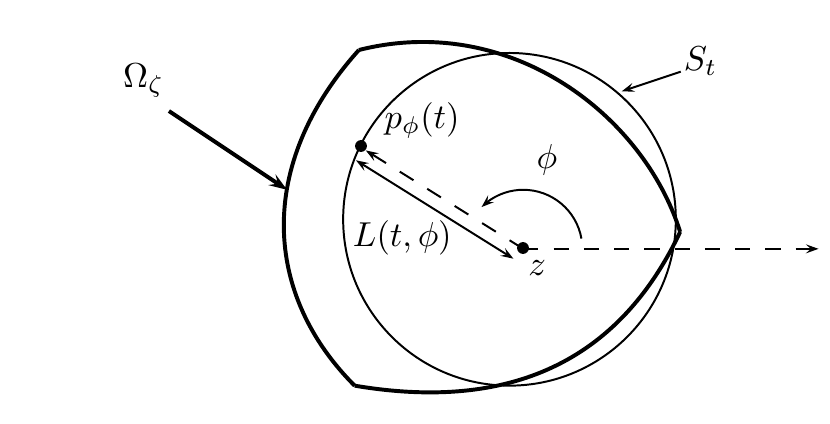}
					\caption{Illustration of $p_{\phi}(t)$ in dimension 
					2.
					The point $z$ is chosen arbitrarily inside the intersection of $\bigcap_{m \in \zeta} B_m$ and $B(x_t,R)$, and is taken to be
					the new origin. The boundaries of $\bigcap_{m \in \zeta} B_m$ and $B(x_t,R)$ are denoted respectively by $\Omega_{\zeta}$ and
					$S_t$.
					The angle 
					$\phi$ is defined in the local polar coordinate system, and for a given 
					angle $\phi$
					the point $p_{\phi}(t)$ is the projection of $z$ along the angle $\phi$ on the boudary of $\bigcap_{m \in \zeta} B_m \cap B(x_t,R)$.
					For some
					values of $\phi$ it belongs to the mobile sphere $S_t$, and for other values 
					it belongs to the static surface $\Omega_{\zeta}$.
					}
					\label{Fig_Convex_Disc}
				\end{figure}

			  	We denote by $\Omega_{\zeta}$ the boundary of $\bigcap_{m \in \zeta} B_m$, and by 
			  	$S_t$ the sphere of centre $x_t$ and radius $R$. We can find a partition 
			  	$\Xi_1,\dots,\Xi_K$ of the space
			  	$$ [0,\pi]^{d-2} \times [0,2 \pi]$$
			  	such that
			  	\begin{align*}
			  		\left\{
			  		\begin{aligned}
			  			&
			  			\forall \phi \in \Xi_j, \, p_{\phi}(t) \in \Omega_{\zeta},
			  			\\
			  			&
			  			\text{or}
			  			\\
			  			&
			  			\forall \phi \in \Xi_j, \, p_{\phi}(t) \in S_t.
			  		\end{aligned}
			  		\right.
			  	\end{align*}
			  	Therefore we can write $H_{\zeta}(t)$ as
			  	\begin{align*}
			  		H_{\zeta}(t)=
			  		\sum_{j=1}^K
			  		\int_{\Xi_j}
			  		\int_0^{L(t,\phi)}
			  		r^{d-1}
			  		\sin^{d-2}(\phi_1)
			  		\sin^{d-3}(\phi_2)
			  		\dots
			  		\sin(\phi_{d-2})
			  		dr \, d\phi.
			  	\end{align*}

			  	Thanks to this representation, we see that a sufficient condition 
			  	for	$H_{\zeta}$
			  	to be differentiable at $t$, $t_1<t<t_2$,  is that 
			  	for every $\phi$ in the interior of every $\Xi_j$, the function
			  	$t \mapsto L(t,\phi)$ is differentiable at $t$. In this case, 
			  	the derivative is given by
			  	\begin{align*}
			  		\frac{d H_{\zeta}(t)}{dt}=
			  		\sum_{j=1}^K
			  		\int_{\Xi_j}
						\frac{\partial L}{\partial t}
						(t,\phi)
						\,
			  		L(t,\phi)^{d-1}
			  		\sin^{d-2}(\phi_1)
			  		\sin^{d-3}(\phi_2)
			  		\dots
			  		\sin(\phi_{d-2})
			  		\, d\phi.
			  	\end{align*}			  	
			  	
			  	We focus now on the differentiability of $t \mapsto L(t,\phi)$, where
			  	$\phi$ belongs to the interior of $\Xi_j$ for some $j$.
			  	
			  	Suppose first that 
			  	$p_{\phi}(t) \in \Omega_{\zeta}$. Because the function 
			  	$t \mapsto x_t$ is continuous, 
			  	and because $z$ is a fixed point,
			  	there exists $h>0$ such that
			  	for every $u \in (t-h,t+h)$, $p_{\phi}(u) \in \Omega_{\zeta}$. 
			  	Therefore, there exists $h>0$ such that
			  	for every $u \in (t-h,t+h)$, $L(u,\phi)=L(t,\phi)$, and $t \mapsto 
			  	L(t,\phi)$
			  	is differentiable at $t$. 
			  	
			  	In the case where $p_{\phi}(t) \in S_t$, by the 
			  	same continuity argument, we obtain that for every $u \in (t-h,t+h)$,
			  	$p_{\phi}(u) \in S_{u}$. Because the distance between $z$ and the 
			  	projection of $z$ on	$S_t$ along the angle $\phi$ is differentiable,
			  	we conclude that $t \mapsto L(t,\phi)$ is also differentiable in 
			  	this case.\\
			  				  	
			  	We have proved that for all $t \neq t_1,t_2$, $H_{\zeta}$ is 
			  	differentiable at 
			  	$t$.
			  	Given that
			  	$$\Lambda_n^{x,y}(t)=\sum_{\zeta \subset \{0,\dots,n\}}
			  	\beta_{n,\zeta} \, H_{\zeta}(t),
			  	$$
			  	we conclude that there is at most a finite number of points at which 
			  	$\Lambda_n^{x,y}$ is not differentiable, and this proves the first 
			  	part of the 
			  	Proposition.\\
			  	
			  	We need now to show the upper bound 
			  	(\ref{Bound_D_Lambda}) for the derivative. Suppose
			  	$\Lambda_n^{x,y}$ is differentiable at $t$. By definition,
			  	\begin{align*}
			  		\Lambda_n^{x,y}(t)
			  		=
			  		\int_{B(x_{t},R)} Y_n(z) dz,
			  	\end{align*}
			  	therefore
			  	\begin{align*}
			  		\Lambda_n^{x,y}(t+h)-\Lambda_n^{x,y}(t)
			  		=
			  		\int_{B(x_{t+h},R)} Y_n(z) dz
			  		-
			  		\int_{B(x_{t},R)} Y_n(z) dz.
			  	\end{align*}			  	
			  	By construction, for all 
			  	$h \geq 0$, $B(x_{t+h},R) \subset B(x_t,R+h)$,
			  	which implies that
			  	\begin{align*}
			  		\Lambda_n^{x,y}(t+h)-\Lambda_n^{x,y}(t)
			  		& \leq
			  		\int_{B(x_{t},R+h)} Y_n(z) dz
			  		-
			  		\int_{B(x_{t},R)} Y_n(z) dz
			  		\\[7pt]
			  		& 
			  		\leq			  		
			  		\int_{B(x_{t},R+h) \diagdown B(x_{t},R)} Y_n(z) dz
			  		\\[7pt]
			  		& 
			  		\leq			  		
			  		|B(x_{t},R+h)|- |B(x_{t},R)|.
			  	\end{align*}			  	
			  	Dividing by $h$ and taking the limit as $h \rightarrow 0$, we obtain\\
			  	\begin{align}
			  		\frac{d \Lambda_n^{x,y}}{dt} (t) 
			  		\leq 
			  		\frac{d V(R)}{dR}
			  		=
			  		S(R),
			  	\end{align}
			  	where $V(R)$ is the volume of a ball of radius $R$, and $S(R)$
			  	its surface area.
			  \end{proof}
			  
			  \begin{proof}[Proof of Proposition \ref{Mean_Val}]
			  	We saw in the previous proof that there is only a finite 
			  	number of points 
			  	$t_1,\dots,t_J$ at which $\Lambda_n^{x,y}$ is not 
			  	differentiable.
			  	By continuity of $\Lambda_n^{x,y}$, and using 
			  	Proposition \ref{Piece_Diff},
			  	\begin{align*}
			  		&  \Phi_n(y)-\Phi_n(x)
			  		\\
			  		= & 
			  		\Lambda_n^{x,y}(\|y-x\|)-\Lambda_n^{x,y}(0)
			  		\\
			  		= & 
			  		\Lambda_n^{x,y}(\|y-x\|)-\Lambda_n^{x,y}(t_J) 
			  		+ \sum_{j=1}^{J-1} 
			  		\Lambda_n^{x,y}(t_{j+1})-\Lambda_n^{x,y}(t_j)
			  		\\
			  		& \quad \quad \quad \quad \quad \quad \quad \quad
			  		\quad \quad \quad \quad \quad \quad \quad 
			  		+\Lambda_n^{x,y}(t_1)-\Lambda_n^{x,y}(0)
			  		\\
			  		\leq & S(R) (\|y-x\|-t_J)
			  		+ \sum_{j=1}^{J-1} S(R)(t_{j+1}-t_j)
			  		\\
			  		& \quad \quad \quad \quad \quad \quad \quad \quad
			  		\quad \quad \quad \quad \quad \quad \quad
			  		+S(R)(t_1-0)
			  		\\
			  		\leq & \|y-x\| \, S(R).	
			  		\qedhere	  		
			  	\end{align*}
			  \end{proof}

	\section{Probability}
	\label{Sec_Finit_Sampl}
	
		\subsection{A martingale argument}
		\label{MartArg}
		
				\begin{Definition}
					We denote by $M_n$ the total mass of $Y_n$, that is
					\begin{equation*}
						M_{n}=\int_{\mathbb{R}^d} Y_{n}(z) dz.
					\end{equation*}
				\end{Definition}

				\begin{Lemma}
					\label{Mass_Change}
					The change of the total mass $M_{n+1}-M_n$
					is given by
					\begin{align}
						M_{n+1}-M_n 
						= U \, \bigl( \varepsilon_{n+1} V(R) 
						- \Phi_n(C_{n+1}) \bigr)
					\end{align}
				\end{Lemma}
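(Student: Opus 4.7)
The proof is a direct computation from the dynamics equation (\ref{Dyn_Yn}), and I expect no real obstacle here: the work was already done in setting up the notation $\Phi_n$ so that this identity comes out cleanly.

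The plan is to subtract $Y_n$ from both sides of (\ref{Dyn_Yn}) and integrate over $\R^d$. First I would write
\begin{align*}
Y_{n+1}(z) - Y_n(z) = U\, \mathds{1}_{\{\|z - C_{n+1}\| \leq R\}} \bigl(\varepsilon_{n+1} - Y_n(z)\bigr),
\end{align*}
which is just (\ref{Dyn_Yn}) rearranged, with $\delta_{B(C_{n+1},R)}(z) = \mathds{1}_{\{\|z-C_{n+1}\|\leq R\}}$. Both sides are integrable in $z$ (the left side by construction, and the right side because the indicator restricts the integration region to a ball of finite volume), so I may integrate term by term.

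Integrating over $\R^d$ yields
\begin{align*}
M_{n+1} - M_n = U \int_{B(C_{n+1}, R)} \bigl(\varepsilon_{n+1} - Y_n(z)\bigr)\, dz = U\bigl(\varepsilon_{n+1} V(R) - \Phi_n(C_{n+1})\bigr),
\end{align*}
where in the last equality I used $\int_{B(C_{n+1},R)} dz = V(R)$ (the volume of a ball of radius $R$) for the first term, and the definition $\Phi_n(C_{n+1}) = \int_{B(C_{n+1},R)} Y_n(z)\, dz$ for the second. Note that $\varepsilon_{n+1} \in \{0,1\}$ is constant in $z$, so it factors out of the integral. This is exactly the claimed formula, so the proof is complete.
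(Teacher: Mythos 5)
Your proof is correct and is essentially identical to the paper's own argument: both subtract $Y_n$ from the dynamics equation (\ref{Dyn_Yn}), integrate over $\R^d$, and identify the two resulting integrals as $V(R)$ and $\Phi_n(C_{n+1})$. Nothing further is needed.
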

				\begin{proof} Using (\ref{Dyn_Yn}),
					\begin{align*}
						M_{n+1}-M_n & 
						= \int_{\mathbb{R}^d} 
						\Big(
						Y_{n+1}(z)-Y_{n}(z)
						\Big)
						 dz
						\\
						& = \int_{\mathbb{R}^d} 
						U \, \delta_{B(C_{n+1},R)} (z) \, \bigl( \varepsilon_{n+1} 
						-Y_n(z)	
						\bigr)
						dz\\
						& = U \, \displaystyle \int_{B(C_{n+1},R)} 
						\Big(
						\varepsilon_{n+1} -Y_n(z)	
						\Big)
						dz\\
						& = U \, \bigl( \varepsilon_{n+1} V(R) 
						- \Phi_n(C_{n+1}) \bigr)
						\qedhere
					\end{align*}				
				\end{proof}

				\begin{Proposition}
					\label{Mart}
					$(M_n)_{n\geq0}$ is a discrete time nonnegative 
					$(\mathcal{G}_n)_{n \geq 0}$ martingale.
				\end{Proposition}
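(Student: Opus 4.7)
The plan is to verify the three defining properties in order: adaptedness and nonnegativity, integrability, and the martingale identity $\E[M_{n+1} \mid \mathcal{G}_n] = M_n$. Adaptedness is immediate since $M_n$ is a measurable function of $Y_n$. Nonnegativity follows from a quick induction on (\ref{Dyn_Yn}): inside $B(C_{n+1},R)$ the update reads $Y_{n+1}(z) = (1-U) Y_n(z) + U \varepsilon_{n+1}$, a convex combination of two nonnegative quantities, and outside this ball $Y_{n+1} = Y_n$. For integrability, the pathwise bound $|\Delta_n| \le V(r_0) + n V(R)$ (the cluster grows by at most one ball per step, by (\ref{Dyn_Clust})) combined with $Y_n \le 1$ yields $M_n \le V(r_0) + n V(R)$, a deterministic finite bound.

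The substantive content is the identity $\E[M_{n+1} - M_n \mid \mathcal{G}_n] = 0$. By Lemma \ref{Mass_Change} this reduces to $\E[\varepsilon_{n+1} V(R) - \Phi_n(C_{n+1}) \mid \mathcal{G}_n] = 0$. Since $\mathcal{G}_n \subset \mathcal{P}_n$, I will first compute the conditional expectation with respect to the richer filtration $\mathcal{P}_n$ and then invoke the tower property. Conditional on $\mathcal{P}_n$, the variable $C_{n+1}$ is uniform on $\Delta_n^R$ and $V_{n+1}$ is uniform on $[0,1]$ independent of $C_{n+1}$, so $\E[\varepsilon_{n+1} \mid \mathcal{P}_n, C_{n+1}] = Y_n(C_{n+1})$, whence
\begin{align*}
\E[\varepsilon_{n+1} V(R) \mid \mathcal{P}_n] = \frac{V(R)}{|\Delta_n^R|} \int_{\Delta_n^R} Y_n(c)\, dc = \frac{V(R)\, M_n}{|\Delta_n^R|},
\end{align*}
where the second equality uses $\text{Supp}(Y_n) = \Delta_n \subset \Delta_n^R$.

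For the other term, Fubini gives
\begin{align*}
\E[\Phi_n(C_{n+1}) \mid \mathcal{P}_n] = \frac{1}{|\Delta_n^R|} \int_{\Delta_n^R} \int_{B(c,R)} Y_n(z)\, dz\, dc = \frac{1}{|\Delta_n^R|} \int_{\R^d} Y_n(z)\, |\Delta_n^R \cap B(z,R)|\, dz,
\end{align*}
after exchanging the order of integration and using $z \in B(c,R) \iff c \in B(z,R)$. The decisive geometric observation is that whenever $z \in \Delta_n$ one has $B(z,R) \subset \Delta_n^R$ by the very definition of the $R$-expansion, so $|\Delta_n^R \cap B(z,R)| = V(R)$ on the support of $Y_n$; the integral therefore collapses to $V(R) M_n / |\Delta_n^R|$, exactly matching the first term. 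Taking $\E[\,\cdot \mid \mathcal{G}_n\,]$ of both sides and invoking the tower property concludes the proof. I do not foresee any genuine obstacle here; the only point requiring care is the geometric inclusion $B(z,R) \subset \Delta_n^R$ for $z \in \Delta_n$, which is precisely what makes the two conditional means cancel.
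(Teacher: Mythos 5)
Your proof is correct and takes essentially the same route as the paper: reduce to Lemma \ref{Mass_Change}, then compute the conditional increment by integrating over $C_{n+1}$ uniform on $\Delta_n^R$ and using the ball symmetry $z \in B(c,R) \Leftrightarrow c \in B(z,R)$; your explicit evaluation of both terms as $V(R)M_n/|\Delta_n^R|$ via the inclusion $B(z,R) \subset \Delta_n^R$ for $z \in \Delta_n$ is just a more concrete packaging of the paper's change-of-variables cancellation. The extra verification of nonnegativity and integrability is welcome diligence but not a different argument.
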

			
				\begin{proof}					
					
					Thanks to Lemma \ref{Mass_Change} and Definition \ref{Def_Yn},
					we can calculate explicitly 
					$\mathbb{E}[\,M_{n+1}-M_n \, | \, \mathcal{G}_n \,]$ as 
					follows:
					
					\begin{align*}
						 & \, \mathbb{E}\bigl[M_{n+1}-M_n \, | \, \mathcal{G}_n\bigr]
						\\[10pt]
						= & \,
						\mathbb{E}\biggl[U \, 
						\mathds{1}_{ \{ V_{n+1} \leq 	Y_n(C_{n+1})\} }
						 \int_{B(C_{n+1},R)}	dz 
							- U \, \int_{B(C_{n+1},R)} Y_n(z)	dz  \, | \, 
							\mathcal{G}_n\biggr]
						\\[10pt]
						= & U \,	
						\int_{\mathbb{R}^d}
						\int_{[0,1]}
						\biggl[
						\mathds{1}_{ \{ v \leq 	Y_n(c)\} }
						 \int_{B(c,R)}	dz 
							- \int_{B(c,R)} Y_n(z)	\,dz \,
						\biggr]\,
						dv \,
						\frac{\mathds{1}_{c \in \Delta_n^R}}{ |\Delta_n^R|}
						\,	dc
						\\[10pt]
						= & U \, 		
						\int_{\mathbb{R}^d}
						\biggl[
						Y_n(c)
						 \int_{B(c,R)}	dz 
							- \int_{B(c,R)} Y_n(z)	dz \,
						\biggr]\, \frac{\mathds{1}_{c \in \Delta_n^R}}{ |\Delta_n^R|}
	  				dc
	  				\\[10pt]
	  				= & \frac{U}{ |\Delta_n^R|} \, \biggl[ \,	
						\int_{\mathbb{R}^d}
						\int_{\mathbb{R}^d}
						\mathds{1}_{z \in B(c,R)} \,
						\mathds{1}_{c \in \Delta_n^R}
						\,
						Y_n(c) dz \, dc
						\\[10pt]
						& \quad \quad \quad \quad \quad \quad \quad \quad \quad 
						\quad \quad 
						\quad 
						-
						\int_{\mathbb{R}^d}
						\int_{\mathbb{R}^d}					
						\mathds{1}_{z \in B(c,R)}\,
						\mathds{1}_{c \in \Delta_n^R}
						\,
						Y_n(z)	dz \, dc \, \biggr].
					\end{align*}
					In particular, for every $f \in \mathcal{S}_c$,
					\begin{align*}
						& \int_{\mathbb{R}^d}
						\int_{\mathbb{R}^d}
	  				\mathds{1}_{\{z \in B(c,R)\}} \, 
	  				\mathds{1}_{\{c \in \text{Supp}(f)^R\}} \, 
	  				f(c)
	  				dz	\, dc \\
						= & \int_{\mathbb{R}^d}
						\int_{\mathbb{R}^d}
	  				\mathds{1}_{\{c \in B(z,R)\}} \, 
	  				f(c)
	  				dz	\, dc
	  				\quad \text{since } c \in B(z,R)
	  				\Leftrightarrow
	  				z \in B(c,R)	  				
	  				\\
						= & \int_{\mathbb{R}^d}
						\int_{\mathbb{R}^d}
	  				\mathds{1}_{\{z \in B(c,R)\}} \, f(z)
	  				dc	\, dz,\\
						= & \int_{\mathbb{R}^d}
						\int_{\mathbb{R}^d}
	  				\mathds{1}_{\{z \in B(c,R)\}} \, 
	  				\mathds{1}_{\{c \in \text{Supp}(f)^R\}} \, 
	  				f(z)
	  				dc	\, dz,
					\end{align*}					
				  so $\mathbb{E}[\, M_{n+1}-M_n \, | \, \mathcal{G}_n \, ]=0$, 
				  which shows that 
				  $(M_n)_{n\geq0}$ 
				  is a martingale.
			  \end{proof}

				\begin{Definition}
					Let $\alpha>0$ be a real number such that $0<\alpha<UV(R)/2$.					  We then define 
					\begin{equation}
						\tau_{\alpha}:=\inf \{p \geq 0: \forall n \geq p,\quad 
						|M_{n+1}-M_n| < \alpha 
						\}.
					\end{equation}
				\end{Definition}
				In particular, $\tau_{\alpha}$ is not a stopping time, but this 
				is not going to be an issue in what follows.

				\begin{Proposition}
					\label{Constraint_C}
					The random time $\tau_{\alpha}$ is a.s. finite, and
					$\forall n>\tau_{\alpha}$,
					\begin{align}
						\left\{
						\begin{aligned}
							& \Phi_n(C_{n+1}) < \frac{\alpha}{U}
							 & \text{if } \varepsilon_{n+1}=0,
							\\
							& \Phi_n(C_{n+1}) > V(R)-\frac{\alpha}{U}
							 & \text{if } \varepsilon_{n+1}=1.
						\end{aligned}
						\right.
					\end{align}
				\end{Proposition}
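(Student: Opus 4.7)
The plan is to combine the martingale convergence of $(M_n)_{n\geq 0}$ from Proposition \ref{Mart} with the explicit formula for its increments from Lemma \ref{Mass_Change}. Since $(M_n)_{n\geq 0}$ is a nonnegative $(\mathcal{G}_n)$-martingale, it converges almost surely to a finite limit $M_\infty$; in particular, $|M_{n+1}-M_n| \to 0$ a.s., which immediately yields that $\tau_\alpha < \infty$ almost surely for every $\alpha>0$.

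Next, I would fix $\omega$ in the full probability event where $\tau_\alpha(\omega) < \infty$, and consider any $n > \tau_\alpha$. By definition of $\tau_\alpha$,
\begin{equation*}
|M_{n+1}-M_n| < \alpha,
\end{equation*}
and by Lemma \ref{Mass_Change},
\begin{equation*}
M_{n+1}-M_n = U\bigl(\varepsilon_{n+1} V(R) - \Phi_n(C_{n+1})\bigr).
\end{equation*}
I would then split into the two possible values of $\varepsilon_{n+1}$. If $\varepsilon_{n+1}=0$, then $M_{n+1}-M_n = -U\,\Phi_n(C_{n+1})$, which is nonpositive since $\Phi_n \geq 0$, so $U\,\Phi_n(C_{n+1}) < \alpha$, giving the first bound. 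If $\varepsilon_{n+1}=1$, then $M_{n+1}-M_n = U(V(R)-\Phi_n(C_{n+1}))$, and since $Y_n$ takes values in $[0,1]$ we have $\Phi_n(C_{n+1}) \leq V(R)$, so this increment is nonnegative and the bound $|M_{n+1}-M_n|<\alpha$ yields $\Phi_n(C_{n+1}) > V(R)-\alpha/U$.

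There is no real obstacle here: the proposition is essentially a direct translation of the convergence of the martingale $(M_n)$ into a pointwise constraint on $\Phi_n(C_{n+1})$, made visible by the clean form of the increment in Lemma \ref{Mass_Change}. The condition $\alpha < UV(R)/2$ is not needed for the proof itself but ensures the two intervals $[0,\alpha/U)$ and $(V(R)-\alpha/U,V(R)]$ are disjoint, so that knowing $\Phi_n(C_{n+1})$ lies in one or the other unambiguously identifies $\varepsilon_{n+1}$. This dichotomy is presumably the feature that will be exploited later, via the forbidden-region construction, to push the martingale argument into a conclusion about the finiteness of positive sampling events.
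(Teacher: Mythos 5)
Your proof is correct and follows essentially the same route as the paper: almost sure convergence of the nonnegative martingale $(M_n)$ gives $\tau_\alpha<\infty$ a.s., and the explicit increment formula from Lemma \ref{Mass_Change}, split according to the value of $\varepsilon_{n+1}$, immediately yields the two bounds. Your additional observations about the signs of the increments and the role of the condition $\alpha<UV(R)/2$ are accurate but not needed beyond what the paper does.
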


				\begin{proof}
					We know that $M_n$ is a nonnegative martingale, so 
					it converges almost surely when $n \rightarrow \infty$. 
					Therefore $\tau_{\alpha}$ is
					almost surely finite, and by definition,
					$\forall n>\tau_{\alpha}, \quad |M_{n+1}-M_n| < \alpha$.
					Using Lemma \ref{Mass_Change}, we observe that
					\begin{align*}
						|M_{n+1}-M_n| =
						\left\{
						\begin{aligned}
							& U \, \Phi_n(C_{n+1})
							 & \text{if } \varepsilon_{n+1}=0,
							\\
							& U \, \bigl( V(R) - \Phi_n(C_{n+1}) \bigr)
							 & \text{if } \varepsilon_{n+1}=1,
						\end{aligned}
						\right.
					\end{align*}
					which concludes the proof.
				\end{proof}				

			\subsection{Forbidden region}
				\label{Forbid_Reg}
                
				The concept of a forbidden region will allow us to treat 
				probabilistically the geometric 
				properties established in \S \ref{Sec_Geometry}.
				
				\begin{Lemma}
					For $n \geq 0$,
					\begin{align}
						\nonumber
						& \{n \geq \tau_{\alpha}\} \cap 
						\{\varepsilon_k=1 \text{ infinitely often} \}
						\\
						\label{Include_1}
						\subset
						\ &
						\{n \geq \tau_{\alpha}\} \cap \bigcap_{j=n}^{\infty}
						\{\sup \Phi_j > V(R)-\alpha /U\}
					\end{align}
				\end{Lemma}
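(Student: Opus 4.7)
The plan is to fix a sample point in the event on the left-hand side, then for an arbitrary $j \geq n$ produce a location at which $\Phi_j$ exceeds $V(R)-\alpha/U$. The key input is Proposition \ref{Constraint_C}: after $\tau_\alpha$, every positive sampling is forced to occur at a point where $\Phi_{k-1}$ is already close to $V(R)$. Since $\varepsilon_k = 1$ infinitely often, the smallest index $k(j) \geq j+1$ with $\varepsilon_{k(j)} = 1$ is well defined and finite, and gives us such a location at time $k(j)-1$.

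The second ingredient is a simple monotonicity observation coming from the recursion (\ref{Dyn_Yn}): at any step where $\varepsilon_{m+1} = 0$, the update rule reduces to $Y_{m+1}(x) = (1-U)Y_m(x)$ on $B(C_{m+1},R)$ and leaves $Y_m$ unchanged elsewhere, so $Y_{m+1} \leq Y_m$ pointwise. By the minimality of $k(j)$, every index $m$ with $j \leq m \leq k(j) - 2$ satisfies $\varepsilon_{m+1} = 0$, so iterating gives $Y_{k(j)-1} \leq Y_j$ pointwise; integrating over any ball of radius $R$ yields $\Phi_{k(j)-1}(x) \leq \Phi_j(x)$ for every $x \in \R^d$, and in particular $\sup \Phi_{k(j)-1} \leq \sup \Phi_j$.

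Combining the two ingredients, $k(j) - 1 \geq j \geq n \geq \tau_\alpha$ and $\varepsilon_{k(j)} = 1$, so Proposition \ref{Constraint_C} applied at index $k(j)$ gives
\[
\sup \Phi_j \;\geq\; \sup \Phi_{k(j)-1} \;\geq\; \Phi_{k(j)-1}(C_{k(j)}) \;>\; V(R) - \alpha/U.
\]
Since $j \geq n$ was arbitrary, this proves the inclusion (\ref{Include_1}). The only case requiring a moment of thought is the degenerate one $\varepsilon_{j+1} = 1$, where $k(j) = j+1$ and the chain of negative samplings is empty: the monotonicity step is then vacuous and Proposition \ref{Constraint_C} applies directly at index $j+1$. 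I do not expect a serious obstacle here; the whole argument amounts to the observation that negative samplings can only decrease $\Phi$ pointwise, so the supremum at time $j$ dominates the supremum at the next positive-sampling time, which is itself forced to be large after $\tau_\alpha$.
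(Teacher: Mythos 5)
Your argument is correct and rests on exactly the same two ingredients as the paper's proof: Proposition \ref{Constraint_C} (after $\tau_\alpha$, a positive sampling forces $\Phi_{k-1}(C_k)>V(R)-\alpha/U$) and the pointwise monotonicity of $Y$, hence of $\Phi$, across negative sampling steps. The paper merely packages these as a contrapositive induction (if $\sup\Phi_j\leq V(R)-\alpha/U$ at some $j\geq\tau_\alpha$, then $\varepsilon_k=0$ for all $k>j$), whereas you argue forward from the next positive sampling time $k(j)$; the two are logically the same proof.
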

				\begin{proof}
					Take $j \geq \tau_{\alpha}$ such that $\sup \Phi_j \leq 
					V(R)-\alpha /U$. 
					Using Proposition \ref{Constraint_C}, this implies that
					$\varepsilon_{j+1}=0$. In particular, 
					$\sup \Phi_{j+1} \leq \sup \Phi_j\leq V(R)-\alpha /U$. By 
					induction, we just
					showed that
					\begin{align*}
						\left\{
						\begin{aligned}
							& j \geq \tau_{\alpha}
							\\
							& \sup \Phi_j \leq V(R)-\alpha /U
						\end{aligned}
						\right.
						\Longrightarrow
						\left\{
						\begin{aligned}
							& j \geq \tau_{\alpha}
							\\
							& \forall k\geq j,	\varepsilon_k=0.
						\end{aligned}
						\right.
					\end{align*}
					The contrapositive of this implication allows to conclude this 
					proof.
				\end{proof}

				\begin{Definition}
					We define the \emph{forbidden region} $F_n$ to be
					\begin{align}
						& F_n:= \{ x \in \mathbb{R}^d \, :
						\frac{\alpha}{U} \leq \Phi_n(x) \leq
						V(R)-\frac{\alpha}{U}
						\}.
					\end{align}										
					We also introduce the quantity
					\begin{align}
						\psi:=V\Bigg(\frac{V(R)-2 \, \alpha/U}{S(R)}\Bigg).
					\end{align}
				\end{Definition}
				
				The reason for the name \emph{forbidden region} is motivated by 
				the following lemma, which tells us that after the time 
				$\tau_{\alpha}$, if the 
				local averages
				are always too high, then the points
				$C_{n+1}$ are forbidden from falling in the region $F_n$. Furthermore, 
				this lemma 
				provides a lower bound on the volume of $F_n$.
				\begin{Lemma}
					\begin{align}
					\nonumber
						& \{j \geq \tau_{\alpha}\} \cap
						\{\sup \Phi_j > V(R)-\alpha /U\}
						\\[10pt]
					\label{Include_2}
						\subset
						\ &
						\{C_{j+1} \notin F_j,|F_j|\geq \psi\}
					\end{align}
				\end{Lemma}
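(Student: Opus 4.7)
The plan is to verify, on the event $\{j \geq \tau_\alpha\} \cap \{\sup \Phi_j > V(R) - \alpha/U\}$, the two conditions $C_{j+1} \notin F_j$ and $|F_j| \geq \psi$ separately, since they involve quite different ingredients.

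The containment $C_{j+1} \notin F_j$ should be a direct corollary of Proposition \ref{Constraint_C}. On $\{j \geq \tau_\alpha\}$ that proposition forces a dichotomy: either $\Phi_j(C_{j+1}) < \alpha/U$ (when $\varepsilon_{j+1}=0$) or $\Phi_j(C_{j+1}) > V(R) - \alpha/U$ (when $\varepsilon_{j+1}=1$). In either branch the value $\Phi_j(C_{j+1})$ lies strictly outside the closed interval $[\alpha/U, V(R) - \alpha/U]$, hence $C_{j+1} \notin F_j$ by definition of the forbidden region.

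The volume estimate is the geometric heart of the lemma, and the key input should be the Lipschitz bound of Proposition \ref{Mean_Val}. The strategy is to exhibit an explicit ball contained in $F_j$. Since $Y_j$ has bounded support, $\Phi_j$ vanishes far from that support, so I would first pick $w$ with $\Phi_j(w) = 0 < \alpha/U$; by the hypothesis $\sup \Phi_j > V(R) - \alpha/U$, I can also pick $x^*$ with $\Phi_j(x^*)$ above $V(R) - \alpha/U$. Along the segment from $x^*$ to $w$, Proposition \ref{Mean_Val} makes $t \mapsto \Phi_j\bigl(x^* + t(w-x^*)/\|w-x^*\|\bigr)$ Lipschitz, hence continuous. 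Because $\alpha < UV(R)/2$, the midpoint $V(R)/2$ lies in the interior of $[\alpha/U, V(R)-\alpha/U]$, so the intermediate value theorem produces a point $y_0$ on the segment with $\Phi_j(y_0) = V(R)/2$. A second application of Proposition \ref{Mean_Val} in a neighbourhood of $y_0$ gives
\begin{align*}
    |\Phi_j(z) - V(R)/2| \leq \|z - y_0\|\, S(R),
\end{align*}
which keeps $\Phi_j(z) \in [\alpha/U, V(R) - \alpha/U]$ for every $z$ in a ball around $y_0$ of radius of order $(V(R) - 2\alpha/U)/S(R)$, and this ball is therefore contained in $F_j$.

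The hard part, I expect, will be nailing down the sharp constant in the radius. The straightforward midpoint argument as sketched only delivers a ball of radius $(V(R) - 2\alpha/U)/(2S(R))$, which is half of the radius appearing in the definition of $\psi$. To close this gap I anticipate one cannot settle for a single isotropic ball but has to thicken the whole portion of the segment on which $\Phi_j \in [\alpha/U, V(R) - \alpha/U]$ into a tube of variable radius, using that the length of this portion is itself at least $(V(R) - 2\alpha/U)/S(R)$ by Proposition \ref{Mean_Val}. Summing (or rather integrating) the perpendicular Lipschitz slack along that segment should then recover the full constant needed for $|F_j| \geq \psi$.
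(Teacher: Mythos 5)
The first half of your argument, namely $C_{j+1} \notin F_j$ on $\{j \geq \tau_{\alpha}\}$, is exactly the paper's: Proposition \ref{Constraint_C} places $\Phi_j(C_{j+1})$ strictly outside $[\alpha/U,\,V(R)-\alpha/U]$ in both branches of the dichotomy, so that part is fine.

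The volume bound is where there is a genuine gap. Your midpoint argument correctly yields a ball of radius $L/2$, with $L:=(V(R)-2\alpha/U)/S(R)$, inside $F_j$, hence $|F_j|\geq V(L/2)=2^{-d}\psi$, and you rightly note that this misses the stated constant. But the repair you propose --- thickening the sub-segment on which $\Phi_j\in[\alpha/U,\,V(R)-\alpha/U]$ into a tube whose radius is the perpendicular Lipschitz slack --- cannot close the gap. In the extremal case where $\Phi_j$ climbs along the segment at the maximal rate $S(R)$, the sub-segment has length exactly $L$ and the slack at arclength $t$ is $\min(t,L-t)$; writing $\gamma$ for the unit-speed parametrisation, the union of the balls $B(\gamma(t),\min(t,L-t))$ is then precisely the ball of radius $L/2$ centred at the midpoint, since any $p\in B(\gamma(t),t)$ satisfies $\|p-\gamma(L/2)\|\leq t+(L/2-t)=L/2$. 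So the tube buys you nothing beyond $V(L/2)$. The paper's argument has a genuinely different geometry: it fixes a single point $y$ with $\Phi_j(y)\geq V(R)-\alpha/U$, takes a sphere $\Gamma$ of large radius $\overline{R}$ centred at $y$ on which $\Phi_j$ vanishes, and applies the one-dimensional intermediate-value/Lipschitz argument (Lemma \ref{Two_Points}) to \emph{every} radial segment $[x,y]$, $x\in\Gamma$. Each such radius meets $F_j$ in a sub-segment of length at least $L$; writing $|F_j|$ in spherical coordinates centred at $y$ and using that $r\mapsto r^{d-1}$ is increasing, each direction contributes at least $\int_0^L r^{d-1}\,dr$, and integrating over the sphere of directions gives $|F_j|\geq V(L)=\psi$. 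The missing idea is thus to exploit a full $(d-1)$-parameter family of segments emanating from one high point, rather than a single segment thickened transversally. (A constant-factor-weaker bound such as $|F_j|\geq 2^{-d}\psi$ would in fact still suffice for Proposition \ref{Prop_Finit_Many_Sampl}, but it does not prove the lemma as stated.)
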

				\begin{proof}
					An immediate consequence of Proposition \ref{Constraint_C} is
					$$j \geq \tau_{\alpha} \Rightarrow C_{j+1} \notin F_j.$$
					More 
					work is required to obtain the lower bound for the volume of 
					the forbidden region. We first define
					$P_j:=\{ x \in \mathbb{R}^d \, :	\Phi_j(x) \geq
					V(R)-\alpha/U	\} $. If we assume 
					$\sup \Phi_j > V(R)-\alpha /U$, then $P_j$ is nonempty.	We can 
					then take a point $y \in P_j$.
								
					The function $\Phi_j$ is continuous, and $\Delta_j$ is finite, 
					so the region 
					$N_{j}:=\{ x \in \mathbb{R}^d \, :	\Phi_j(x) \leq 
					\alpha/U	\} $ is 
					infinite. Indeed, for every $x$ at a distance from $\Delta_j$ 
					larger than $R$,
					$\Phi_j(x)=0$. In particular, for a large enough positive number 
					$\overline{R}$, we can consider $\Gamma$ the sphere of 
					radius $\overline{R}$ and 
					centre $y$, and the ball $B(y,\overline{R})$ such that
					\begin{align}
					\label{Ball_Gamma_Delta}
						\left\{
						\begin{aligned}
							& \Delta_j \subset B(y,\overline{R}),
							\\
							& \Gamma \subset N_{j}. 
						\end{aligned}
						\right.
					\end{align}	
				
				For $x,y \in \R^d$, we denote by $[x,y]$ the line-segment 
				between $x$ and $y$.
				We need the following lemma:
				\begin{Lemma}
				\label{Two_Points}
					The point $y \in P_{j}$ being fixed, for every point $x \in 
					\Gamma$, we can 
					find 
					two points $x_0, y_0$ such that:
					\begin{align}
						\left\{
						\begin{aligned}
							& [x_0, y_0] \subset F_{j},
							\\
							& [x_0, y_0] \subset [x, y],
							\\
							& \|y_0-x_0\|\geq \frac{V(R)-2 \, \alpha/U}{S(R)}.
						\end{aligned}
					\right.
					\end{align}
				\end{Lemma}
				\begin{figure}
					\includegraphics{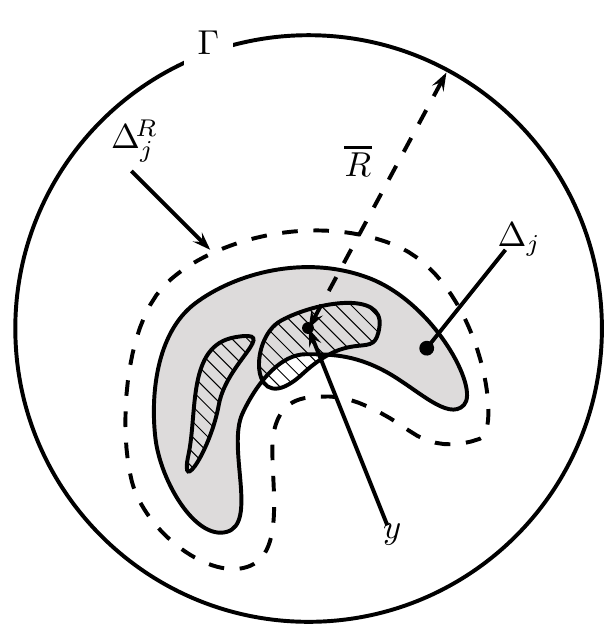}
					\caption{Illustration of the construction (\ref{Ball_Gamma_Delta}). The grey area $\Delta_j$ is the support of $Y_j$,
					and the hashed area is the region $P_j$. We choose arbitrarily $y \in P_j$.
					The dashed line is the boundary of $\Delta_j^{\!R}$, the set of points at distance at most $R$ from $\Delta_j$.
					For all $x \notin \Delta_j^{\!R}$, $\phi_j(x)=0$, therefore for large enough $\overline{R}$, the ball $B(y,\overline{R})$
					 and its boundary $\Gamma$ satisfy (\ref{Ball_Gamma_Delta}).}
					\label{Fig_Ball_Around_Delta}
				\end{figure}		  		
					By integrating the result of Lemma \ref{Two_Points} 
					over all the points $x \in \Gamma$, we find 
					that the volume of 
					$F_j$ is larger than the volume of a ball of radius
					$(V(R)-2 \, \alpha/U)/S(R)$, hence the 
					result.
				\end{proof}			
				\begin{proof}[Proof of Lemma \ref{Two_Points}]
					The function $\Lambda_j^{x,y}$ defined in (\ref{Lambda_nxy})
					is continuous, with 
					$\Lambda_j^{x,y}(0)=0$ and 
					$\Lambda_j^{x,y}(\|y-x\|)\geq V(R)-\alpha/U$, so 
					there
					are two points $t_1,t_2 \in [0,\|y-x\|]$ such that 
					$\Lambda_j^{x,y}([t_1,t_2])=
					[\alpha/U,V(R)-\alpha/U]$.
					By application of the continuous function 
					$t \longrightarrow x+ t \,(y-x)/(\|y -x\|)$, this means that 
					there are two points
					$x_0,y_0 \in \mathbb{R}^d$ such that 
					\begin{itemize}
						\item[--] $\Phi_j(x_0)=\alpha/U$,
						\item[--] $\Phi_j(y_0)=V(R)-\alpha/U$,
						\item[--] $\forall z \in (x_0,y_0), 
							\Phi_j(z)\in (\alpha/U,V(R)-\alpha/U)$.
					\end{itemize}
					The last statement is just the fact that $(x_0,y_0) \subset 
					F_{j}$.
					By using Corollary \ref{Mean_Val}, we find that
			  	\begin{align*}
			  		\|y_0-x_0\| \, S(R) & \geq
			  		\Phi_j(y_0)-\Phi_j(x_0)
			  		\\
			  		& \geq 
			  		V(R)-2 \, \alpha/U.
			  	\end{align*}
				\end{proof}

				We reach now the main point of this section, which is an upper 
				bound for the probability that infinitely many positive sampling
				events take place.
				\begin{Proposition}
				\label{UBoundProb}
					\begin{align}
						\nonumber
						& \mathbb{P}
						\big( \varepsilon_k=1 \text{ infinitely often} \big)
						\\
						\leq
						\, &
						\sum_{l=0}^{\infty}
						\mathbb{P}
						\big( \bigcap_{j=l}^{\infty}
						\{C_{j+1} \notin F_j,|F_j|\geq \psi\}
						\big)
					\end{align}
				\end{Proposition}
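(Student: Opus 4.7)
The strategy is to chain the two set-inclusions (\ref{Include_1}) and (\ref{Include_2}) already established in this subsection, decompose the event $\{\varepsilon_k=1 \text{ i.o.}\}$ along the a.s. finite random time $\tau_\alpha$, and then close out with countable subadditivity.

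First, since $\tau_\alpha$ is integer-valued and almost surely finite by Proposition \ref{Constraint_C}, we may write, up to a null set,
\begin{align*}
\{\varepsilon_k = 1 \text{ i.o.}\}
= \bigcup_{l=0}^\infty \Big( \{l \geq \tau_\alpha\} \cap \{\varepsilon_k = 1 \text{ i.o.}\} \Big).
\end{align*}
For each fixed $l$, the inclusion (\ref{Include_1}) applied to $n=l$ gives
\begin{align*}
\{l \geq \tau_\alpha\} \cap \{\varepsilon_k = 1 \text{ i.o.}\}
\subset \{l \geq \tau_\alpha\} \cap \bigcap_{j=l}^\infty \{ \sup \Phi_j > V(R) - \alpha/U \}.
\end{align*}

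Next, on this latter event every index $j \geq l$ satisfies $j \geq \tau_\alpha$, so the inclusion (\ref{Include_2}) applies termwise inside the intersection and yields
\begin{align*}
\{l \geq \tau_\alpha\} \cap \bigcap_{j=l}^\infty \{ \sup \Phi_j > V(R) - \alpha/U \}
\subset \bigcap_{j=l}^\infty \{ C_{j+1} \notin F_j,\, |F_j| \geq \psi \}.
\end{align*}
Combining these two inclusions gives $\{\varepsilon_k = 1 \text{ i.o.}\} \subset \bigcup_{l=0}^\infty \bigcap_{j=l}^\infty \{C_{j+1} \notin F_j,\, |F_j| \geq \psi\}$.

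Finally I take probabilities and apply the countable union bound, which produces exactly the right-hand side of the Proposition. There is no real obstacle here; everything was prepared in the two preceding lemmas. The only point needing a word of care is that the decomposition over $l$ is legitimate because $\tau_\alpha$ takes values in $\mathbb{N}$ and is finite almost surely, so the union over $l \in \mathbb{N}$ covers $\{\tau_\alpha < \infty\}$ up to a null set and hence swallows the whole event $\{\varepsilon_k=1 \text{ i.o.}\}$.
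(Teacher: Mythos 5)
Your proof is correct and follows essentially the same route as the paper: both decompose along the a.s.\ finite time $\tau_\alpha$ (the paper sums over $\{\tau_\alpha=n\}$ and enlarges to $\{n\geq\tau_\alpha\}$, which is the same union bound you invoke), then chain the inclusions (\ref{Include_1}) and (\ref{Include_2}) termwise. Nothing is missing.
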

				\begin{proof}
					As $\tau_{\alpha}<\infty$ a.s.,

					\begin{align}
						\nonumber
						& \mathbb{P}
						\big( \varepsilon_k=1 \text{ i.o.} \big)
						\\
						\nonumber
						=
						\, &
						\mathbb{P}
						\big( \{\tau_{\alpha}<\infty\} \cap
						\{ \varepsilon_k=1 \text{ i.o.} \} \big)
						\\
						=
						\nonumber
						\, &
						\sum_{n=0}^{\infty}
						\mathbb{P}
						\big( \{\tau_{\alpha}=n\} \cap
						\{ \varepsilon_k=1 \text{ i.o.} \} \big)
						\\
						\label{AvDern}
						\leq
						\, &
						\sum_{n=0}^{\infty}
						\mathbb{P}
						\big( \{n \geq \tau_{\alpha}\} \cap
						\{ \varepsilon_k=1 \text{ i.o.} \} \big)
					\end{align}

					We can write $\{n \geq \tau_{\alpha}\}=
					\bigcap_{j=n}^{\infty}\{j \geq \tau_{\alpha}\}$
					Using this in (\ref{Include_2}), we obtain
					\begin{align*}
						& \{n \geq \tau_{\alpha}\} \cap 
						\bigcap_{j=n}^{\infty}
						\{\sup \Phi_j > V(R)-\alpha /U \}
						\\
						\subset
						\ &
						\bigcap_{j=n}^{\infty}
						\{C_{j+1} \notin F_j,|F_j|\geq \psi\}.
					\end{align*}

					Combining this with (\ref{Include_1}), we have
					
					\begin{align}
						\nonumber
						& \{n \geq \tau_{\alpha}\} \cap 
						\{\varepsilon_k=1 \text{ i.o.} \}
						\\
						\label{JustForb}
						\subset
						\ &
						\bigcap_{j=n}^{\infty}
						\{C_{j+1} \notin F_j,|F_j|\geq \psi\},
					\end{align}
					and the result follows.
					
				\end{proof}

		\subsection{Finitely many positive sampling events}

			\label{Finit_Sampl}
				
				\begin{Proposition}
				\label{Prop_Finit_Many_Sampl}
					\begin{align}
						\nonumber
						\mathbb{P}
						\big( \varepsilon_k=1 \text{ infinitely often} \big)
						=0.
					\end{align}
				\end{Proposition}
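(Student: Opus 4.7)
The plan is to combine Proposition \ref{UBoundProb} with a conditional Borel--Cantelli-type estimate, showing that every term in the upper-bounding series vanishes. Fix $l \geq 0$ and set $A_j := \{C_{j+1} \notin F_j,\, |F_j| \geq \psi\}$; it is enough to prove $\mathbb{P}(\bigcap_{j \geq l} A_j) = 0$, after which summing over $l$ in Proposition \ref{UBoundProb} concludes.

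The first step is to evaluate $\mathbb{P}(A_j \mid \mathcal{P}_j)$. Since $F_j$ and $\Delta_j^R$ are $\mathcal{P}_j$-measurable, $C_{j+1}$ is uniform on $\Delta_j^R$ given $\mathcal{P}_j$, and $F_j \subset \Delta_j^R$ (because $\Phi_j$ vanishes outside $\Delta_j^R$), one finds
\begin{equation*}
\mathbb{P}(A_j \mid \mathcal{P}_j) \;=\; \1_{\{|F_j| \geq \psi\}} \Big( 1 - \frac{|F_j|}{|\Delta_j^R|} \Big) \;\leq\; 1 - \frac{\psi}{|\Delta_j^R|},
\end{equation*}
where the last inequality is used whenever $|\Delta_j^R| \geq \psi$, and is otherwise vacuous as $A_j$ cannot occur. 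The second step is a pathwise upper bound on $|\Delta_j^R|$: from \eqref{Dyn_Clust}, $\Delta_j^R$ is contained in $B(C_0, r_0 + R) \cup \bigcup_{k=1}^j B(C_k, 2R)$, so
\begin{equation*}
|\Delta_j^R| \;\leq\; K_j \;:=\; V(r_0 + R) + j\, V(2R),
\end{equation*}
regardless of the realization, giving $\mathbb{P}(A_j \mid \mathcal{P}_j) \leq 1 - \psi/K_j$ a.s.\ for all $j$ large enough that $K_j \geq \psi$.

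Iterating this bound via the tower property, and using that $\bigcap_{j=l}^{N-1} A_j$ is $\mathcal{P}_N$-measurable, one obtains
\begin{equation*}
\mathbb{P}\Big(\bigcap_{j=l}^N A_j\Big) \;\leq\; \Big(1 - \frac{\psi}{K_N}\Big) \; \mathbb{P}\Big(\bigcap_{j=l}^{N-1} A_j\Big),
\end{equation*}
whence by induction $\mathbb{P}\big(\bigcap_{j=l}^N A_j\big) \leq \prod_{j=l}^N (1 - \psi/K_j)$ (for $l$ large enough). Since $K_j$ grows linearly in $j$, the series $\sum_j 1/K_j$ diverges, the infinite product tends to $0$, and monotone convergence gives $\mathbb{P}(\bigcap_{j \geq l} A_j) = 0$.

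The only subtle point is to make sure that the cluster bound $|\Delta_j^R| \leq K_j$ is truly deterministic in $j$, so that the divergence of $\sum 1/K_j$ may be invoked uniformly over the sample path; once this pathwise control is established, everything reduces to the classical estimate that an infinite product $\prod_j (1 - p_j)$ with $\sum p_j = \infty$ must vanish. The rest of the proof carries through without any further probabilistic input beyond what Proposition \ref{UBoundProb} already provides.
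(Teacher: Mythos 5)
Your proposal is correct and follows essentially the same route as the paper: reduce via Proposition \ref{UBoundProb}, compute $\mathbb{P}(C_{j+1}\notin F_j,\,|F_j|\geq\psi \mid \mathcal{P}_j)\leq 1-\psi/|\Delta_j^R|$ using the uniform law of $C_{j+1}$ on $\Delta_j^R$, bound $|\Delta_j^R|$ linearly in $j$ from \eqref{Dyn_Clust}, iterate by the tower property, and conclude from the divergence of $\sum_j 1/K_j$ that the product vanishes. The only (immaterial) difference is that you exploit the standing assumption $Y_0=a\,\delta_{B(C_0,r_0)}$ to make the bound on $|\Delta_j^R|$ fully deterministic, whereas the paper first conditions on $\Delta_0$ so as to accommodate a random initial support with $\E|\Delta_0|<\infty$.
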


				\begin{proof}
					Using Proposition \ref{UBoundProb}, we simply need to prove 
					that for every $l \geq 0$,
					\begin{align*}
						\mathbb{P} \big(
							\bigcap_{j=l}^{\infty} \{C_{j+1} \notin F_j,
							|F_j|\geq \psi
							\}
						\big)=0.	
					\end{align*}
					By monotone convergence, we have
					\begin{align*}
						& \,
						\mathbb{P} \big(
							\bigcap_{j=l}^{\infty} \{C_{j+1} \notin F_j,
							|F_j|\geq \psi
							\}
						\big)
						=
						\lim_{n \rightarrow \infty}
						\mathbb{P} \big(
							\bigcap_{j=l}^{n} \{C_{j+1} \notin F_j,
							|F_j|\geq \psi
							\}
						\big).		
					\end{align*}
					We are going to work in the slightly more general setting where $Y_0$, and therefore $\Delta_0$, are allowed to be random.
					This is easily dealt with, because we begin by conditioning on
					$\Delta_0$:
					\begin{align*}
						\mathbb{P} \big(
							\bigcap_{j=l}^{n} \{C_{j+1} \notin F_j,
							|F_j|\geq \psi \}
						\big)
						=
						\mathbb{E} \, \big[ 
						\mathbb{P} \big(
							\bigcap_{j=l}^{n} \{C_{j+1} \notin F_j,
							|F_j|\geq \psi \}
						\, | \, \Delta_0
						\big)
						 \, \big].
          \end{align*}
          We then condition on all but the last reproduction events:
					\begin{align}
						\nonumber
						& \,
						\mathbb{P} \Big(
							\bigcap_{j=l}^{n} \{C_{j+1} \notin F_j,
							|F_j|\geq \psi \}
						\, | \, \Delta_0
						\Big)
						\\
						\label{Recurr_Prob}
						= & \,
						\mathbb{E} \, \bigg[
						\mathds{1}_{
						\displaystyle \{ C_{l+1} \notin F_l,|F_l|\geq \psi \} }
						\dots
						\mathds{1}_{
						\displaystyle \{ C_{n} \notin F_{n-1},|F_{n-1}|\geq \psi \} }
						\\
						\nonumber
						& \,
						\quad \quad \quad \quad 
						\mathbb{P} \Big(
							 C_{n+1} \notin F_n,
							|F_n|\geq \psi
						\, | \, \Delta_0 , F_l, C_{l+1} , ..., F_{n-1}, C_{n} 
						\Big)
						\, | \, \Delta_0
						\bigg].
					\end{align}											
					We can calculate the last term by conditioning on $F_n$ and 
					$\Delta_n$:
					\begin{align}
						\nonumber
						& \,
						\mathbb{P} \big(
							 C_{n+1} \notin F_n,
							|F_n|\geq \psi
						\, | \, \Delta_0 , F_l, C_{l+1} , ..., F_{n-1}, C_{n} , F_n, 
						\Delta_n
						\big)
						\\[10pt]
						\nonumber
						= & \,
						\mathds{1}_{|F_n|\geq \psi}
						\mathbb{P} \big(
							 C_{n+1} \notin F_n
						\, | \, F_n, \Delta_n
						\big)				
						\\[10pt]
						\nonumber
						= & \,
						\mathds{1}_{|F_n|\geq \psi}
						\Big( 1- \frac{|F_n|}{|\Delta_n^R|}
						\Big)
						\\[1pt]
						\nonumber
						\leq & \,
						1- \frac{\psi}{|\Delta_n^R|}
						\\[1pt]
						\label{Ineq_Prob}
						\leq & \,
						1- \frac{\psi}{|\Delta_0^R|+n V(2R)}.
					\end{align}					
					The second and third equalities come from the fact that 
					conditionally on $\Delta_n$, $C_{n+1}$ is sampled uniformly
					from $\Delta_n^R$, independently of the past. The last 
					inequality comes from (\ref{Dyn_Clust}):
	        \begin{equation*}
	        	\Delta_{n}=\Delta_0 \cup  
	        	\bigcup_{
	        		\substack{
	        			1\leq k \leq n,\\
	        			\varepsilon_{k}=1	}
	        		}
	        	 B(C_{k},R).
	        \end{equation*}
	        In particular, it implies that	        
	        \begin{align*}
	        	|\Delta_n^R|
	        	\leq &
	        	|\Delta_0^R|
	        	+ |B(C_1,R)^R|
	        	+ \dots
	        	+ |B(C_n,R)^R|
	        	\\
	        	\leq &
	        	|\Delta_0^R|
	        	+ n \, V(2R).
	        \end{align*}
					Putting inequality (\ref{Ineq_Prob}) into (\ref{Recurr_Prob}),
					we obtain the following upper bound:

					\begin{align}
						\nonumber
						& \,
						\mathbb{P} \Big(
							\bigcap_{j=l}^{n} \{C_{j+1} \notin F_j,
							|F_j|\geq \psi \}
						\, | \, \Delta_0
						\Big)
						\\
						\label{Recurr_Prob_2}
						\leq & \,
						\Big(
						1- \frac{\psi}{|\Delta_0^R|+n V(2R)}
						\Big)
						\mathbb{P} \Big(
							\bigcap_{j=l}^{n-1} \{C_{j+1} \notin F_j,
							|F_j|\geq \psi \}
						\, | \, \Delta_0
						\Big).
					\end{align}						
					Inequality (\ref{Recurr_Prob_2}) provides a recurrence 
					relation, which we can solve immediately to obtain
					\begin{align*}
						\mathbb{P} \big(
							\bigcap_{j=l}^{n} \{C_{j+1} \notin F_j,
							|F_j|\geq \psi \}
						\, | \, \Delta_0
						\big)
						\leq \,
						\prod_{j=l}^n
						\Big(1- \frac{\psi}{|\Delta_0^R|+j V(2R)}\Big) \,.
					\end{align*}
					Taking expectation and then the limit as $n\rightarrow \infty$ 
					, we obtain
					\begin{align*}
						\mathbb{P} \big(
							\bigcap_{j=l}^{\infty} \{C_{j+1} \notin F_j,
							|F_j|\geq \psi \}
						\big)
						\leq & \,
						\lim_{n \rightarrow \infty}
						\mathbb{E} \, \big[
						\prod_{j=l}^n
						\big(1- \frac{\psi}{|\Delta_0^R|+j V(2R)}\big) \,
						\big] 
						\\
						\leq & \,
						\mathbb{E} \, \big[
						\prod_{j=l}^\infty
						\big(1- \frac{\psi}{|\Delta_0^R|+j V(2R)}\big) \,
						\big].
					\end{align*}
					We rewrite the infinite random product using logarithms:
					\begin{align*}
						& \,
						\prod_{j=l}^\infty
						\big(1- \frac{\psi}{|\Delta_0^R|+j V(2R)}\big)
						\\
						= & \,
						\exp \Big(
						\sum_{j=l}^{\infty}
						\log
						\big(1- \frac{\psi}{|\Delta_0^R|+j V(2R)}\big)
						\Big).
					\end{align*}
					After observing that
					\begin{align*}
						& \,
						\log
						\big(1- \frac{\psi}{|\Delta_0^R|+j V(2R)}\big)						
						\mathop{\sim}_{j \rightarrow \infty}^{a.s.} \,
						\frac{-\psi/V(2R)}{j},
					\end{align*}
					we conclude that the infinite product is almost surely equal 
					to $0$. Because we chose $Y_0$ to be deterministic, we conclude that
					\begin{align*}
						\mathbb{P} \big(
							\bigcap_{j=l}^{\infty} \{C_{j+1} \notin F_j,
							|F_j|\geq \psi \}
						\big)
						=0.
					\end{align*}
				\end{proof}
				\begin{Remark}
				\label{Rmk_Delta0_Random}
					In the case where we take $Y_0$ to be random, a 
					sufficient condition for 
					the expectation of the infinite product to also be 
					equal to $0$ is simply $\E(|\Delta_0|)<\infty$, that is the volume of the initial support has a finite expectation.
				\end{Remark}

	\section{Proof of the theorems}
	\label{Prf_Thrm}	
	
		\subsection{Proof of Proposition \ref{Conv_Delta_n}}

				We proved in Proposition \ref{Prop_Finit_Many_Sampl} that with 
				probability one,
				there are only finitely many sampling events. This means that there 
				exists an almost surely finite 
				random time $\kappa$  such that
				\begin{align}
					\forall n > \kappa, \quad \varepsilon_n=0.
				\end{align}
				We recall the dynamics of the cluster $\Delta_n$ described by 
				(\ref{Dyn_Clust}):
        \begin{equation*}
        	\Delta_{n}=\Delta_0 \cup  
        	\bigcup_{
        		\substack{
        			1\leq k \leq n,\\
        			\varepsilon_{k}=1	}
        		}
        	 B(C_{k},R).
        \end{equation*}
        Therefore, if we define $B:=\Delta_{\kappa}$, we have
				\begin{align}
					\forall n > \kappa, \quad \Delta_n=B,
				\end{align}
				and the proof of Proposition \ref{Conv_Delta_n} is complete.\\

				We can now generalise Proposition \ref{Conv_Delta_n} 
				by removing the technical condition on the starting 
				point and allowing $Y_0$
				to be any function in $\mathcal{S}_c$.
				\begin{Proposition}
					\label{Conv_Delta_n_General}
					Suppose $Y_0=f \in \mathcal{S}_c$.
					Then, there exists an almost surely finite 
					random time $\kappa$  such that
					\begin{align}
						\forall n > \kappa, \quad \varepsilon_n=0.
					\end{align}
					Therefore, there exists an
					almost surely bounded
					random set $B \in \mathbb{R}^d$  such that
					\begin{align}
						\forall n > \kappa, \quad \Delta_n=B.
					\end{align}
				\end{Proposition}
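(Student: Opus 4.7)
The plan is to verify that each step of the proof of Proposition \ref{Conv_Delta_n} carries over to general $Y_0 = f \in \mathcal{S}_c$, since the Markov chain $(Y_n)_{n \geq 0}$ of Definition \ref{Def_Yn} is already well-defined for any $\mathcal{S}_c$-valued initial condition and inherits the bounds $Y_n \in [0, 1]$ and $|\text{Supp}(Y_n)| < \infty$. The martingale property of Proposition \ref{Mart}, the almost-sure constraint on $\Phi_n(C_{n+1})$ after $\tau_\alpha$ (Proposition \ref{Constraint_C}), the definition and volume lower bound for the forbidden region $F_n$ in \S\ref{Forbid_Reg}, and the combinatorial probability bound of Proposition \ref{UBoundProb} all rely only on the symmetric kernel $\mathds{1}_{z \in B(c, R)} = \mathds{1}_{c \in B(z, R)}$ together with $Y_n$ being $[0, 1]$-valued with bounded support. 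Since $f \in \mathcal{S}_c$ implies $|\Delta_0^R| < \infty$ deterministically, the infinite-product argument at the end of Proposition \ref{Prop_Finit_Many_Sampl} (together with Remark \ref{Rmk_Delta0_Random}) still yields $\mathbb{P}(\varepsilon_k = 1 \text{ infinitely often}) = 0$ once the geometric input is in place.

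The one point that requires work is Lemma \ref{Struct_Yn}, which fails in the present generality: on each cell $A_{n, \zeta}$ the function $Y_n$ is an affine transformation of $f$ rather than a constant, so the spherical-coordinate differentiability argument underlying Proposition \ref{Piece_Diff} is unavailable. The conclusion of Proposition \ref{Mean_Val}, however, can be obtained directly. For $0 \leq t < t' \leq \|y - x\|$, the inclusion $B(x_{t'}, R) \subset B(x_t, R + (t' - t))$ combined with $0 \leq Y_n \leq 1$ gives
\begin{align*}
\Lambda_n^{x, y}(t') - \Lambda_n^{x, y}(t) \leq \int_{B(x_{t'}, R) \setminus B(x_t, R)} Y_n(z)\, dz \leq V\bigl(R + (t' - t)\bigr) - V(R),
\end{align*}
and the analogous bound after swapping $t$ and $t'$ controls $|\Lambda_n^{x, y}(t') - \Lambda_n^{x, y}(t)|$. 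Partitioning $[0, \|y - x\|]$ into $N$ equal sub-intervals, telescoping, and sending $N \to \infty$ while using $\bigl(V(R + h) - V(R)\bigr)/h \to S(R)$ as $h \to 0^+$, one recovers $\Phi_n(y) - \Phi_n(x) \leq \|y - x\|\, S(R)$.

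With Proposition \ref{Mean_Val} reinstated in full generality, the forbidden-region argument of \S\ref{Forbid_Reg} and the probability estimate of \S\ref{Finit_Sampl} apply verbatim, yielding $\mathbb{P}(\varepsilon_k = 1 \text{ i.o.}) = 0$. Hence there exists an almost surely finite random $\kappa$ with $\varepsilon_n = 0$ for every $n > \kappa$; setting $B := \Delta_\kappa$ and invoking the cluster dynamics (\ref{Dyn_Clust}) gives $\Delta_n = B$ for all $n > \kappa$, as required. The main obstacle is, predictably, the Lipschitz bound for $\Phi_n$, but replacing the piecewise-constant decomposition by the direct set-inclusion argument above actually streamlines the proof in this more general setting.
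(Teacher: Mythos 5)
Your proof is correct, but it takes a genuinely different route from the paper. The paper proves Proposition \ref{Conv_Delta_n_General} by coupling and thinning: it runs a dominating chain $\widetilde{Y}$ started from $\delta_{B(C_0,r_0)} \geq f$, shows by induction that the chain $Y'$ driven by the same $(\widetilde{C}_n,\widetilde{V}_n)$ but started from $f$ satisfies $Y'_n \leq \widetilde{Y}_n$ for all $n$ (using the monotonicity of the update rule), and then extracts $(Y_n)$ as the subsequence of $(Y'_n)$ at the stopping times $J_n$ at which $\widetilde{C}_{J_n}$ lands in $\big(\Delta'_{J_n-1}\big)^{R}$, checking that this thinned chain has the law of Definition \ref{Def_Yn}; Proposition \ref{Conv_Delta_n} applied to $\widetilde{Y}$ forces $\widetilde{\varepsilon}_n=0$ eventually, and the domination $Y_n \leq \widetilde{Y}_{J_n}$ transfers this to $\varepsilon_n$. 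You instead re-run the entire forward machinery for general $f$, and you correctly locate the one place where the special initial condition actually enters: Lemma \ref{Struct_Yn} and the piecewise-differentiability argument of Proposition \ref{Piece_Diff}. Your replacement --- the telescoping estimate $\Lambda_n^{x,y}(t')-\Lambda_n^{x,y}(t) \leq V\big(R+(t'-t)\big)-V(R)$ over a partition of mesh $h \to 0$, together with $\big(V(R+h)-V(R)\big)/h \to S(R)$ --- does recover the Lipschitz bound of Proposition \ref{Mean_Val} for arbitrary $[0,1]$-valued $Y_n$, and also yields the continuity of $\Lambda_n^{x,y}$ that Lemma \ref{Two_Points} needs; the remaining ingredients (Propositions \ref{Mart}, \ref{Constraint_C}, \ref{UBoundProb}, \ref{Prop_Finit_Many_Sampl}) indeed use only the symmetry of the kernel, $0 \leq Y_n \leq 1$, boundedness of $\Delta_j$, and $|\Delta_0^R|<\infty$. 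The trade-off: the paper's coupling is shorter given that Proposition \ref{Conv_Delta_n} is already in hand and requires no re-audit of the geometric toolbox, while your argument is self-contained, avoids the somewhat delicate verification that the thinned chain has the correct law, and as a bonus renders the spherical-coordinate differentiability analysis of Proposition \ref{Piece_Diff} unnecessary even in the special case.
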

        \begin{proof}
					We proceed by coupling $Y$ with a Markov chain $\widetilde{Y}$
					with the same transition probabilities,
					but started from $\widetilde{Y}_0=\delta_{B(C_0,r_0)}$ such that
					$Y_0 \leq \widetilde{Y}_0$.
					We denote the initial 	conditions  by
					$\widetilde{Y}_0=\widetilde{f}$ and $Y_0=f$. We first build 
					$\widetilde{Y}$ as 
					described in
					Definition \ref{Def_Yn}. We then use the sequences 
					$(\widetilde{C}_n)_{n 
					\geq 1}$ and	$(\widetilde{V}_n)_{n \geq 1}$ that we used 
					to construct $\widetilde{Y}$  in the following way. First consider the 
					random sequence $Y'$
					defined by $Y'_0=f$, and for $n \geq 0$,
				  \begin{align*}
				  	Y'_{n+1}=Y'_n+U \, \delta_{B(\widetilde{C}_{n+1},R)} \, 
				  	\bigl( \mathds{1}_{ \{ 
				  	\widetilde{V}_{n+1} \leq 	Y'_n(\widetilde{C}_{n+1})\} } -Y'_n	\bigr).			  	
				  \end{align*}
				  We can prove by induction that
					\begin{equation}
						\label{Monoton}
						\forall n \geq 0, \, Y'_n \leq \widetilde{Y}_n.
					\end{equation}
					It is of course true at $n=0$, and then we just 
					observe that if $Y'_n \leq \widetilde{Y}_n$, then
				  \begin{align*}
				  	& \widetilde{Y}_{n+1}-{Y}_{n+1}'
				  	\\
				  	=&
				  	\big(1-U	\, \delta_{B(\widetilde{C}_{n+1},R)} \big)
				  	\big(\widetilde{Y}_n-{Y}_n'\big)
				  	\\
				  	&  \quad \quad \quad \quad 
				  	+ U	\, \delta_{B(\widetilde{C}_{n+1},R)} \, 
				  	\bigl( 
				  		\mathds{1}_{ \{ \widetilde{V}_{n+1} \leq 	
				  		\widetilde{Y}_n(\widetilde{C}_{n+1})\} } 
				  	- \mathds{1}_{ \{ \widetilde{V}_{n+1} \leq 	
				  	{Y}_n'(\widetilde{C}_{n+1})\} } 
				  	\bigr)
				  	\\
				  	\geq & 0.
				  \end{align*}

				  We denote by 
				  $\widetilde{\Delta}$ and 
				  $\Delta'$
				  the respective sequences of supports, and in particular we have 
				  proved that
					\begin{equation}
						\forall n \geq 0, \, \Delta'_n \subset \widetilde{\Delta}_n.
					\end{equation}
				  
				  We define 
				  the sequence of $\big(\sigma(\widetilde{\mathcal{P}}_n,f)\big)_{n 
				  \geq 0}$-stopping 
				  times $(J_n)_{n \geq 0}$ by setting
					\begin{align}
						\label{Incl_Deltas}
						\left\{
						\begin{aligned}
							& J_0=0, 
							\\
							& J_{n+1}=\inf \{k > J_n: \widetilde{C}_k \in 
							{\Big(\Delta'_{k-1}}\Big)^{R}\}.
						\end{aligned}
						\right.
					\end{align}
					We now construct $(Y_n)_{n \geq 0}$, 
					$(C_n)_{n \geq 1}$
					and $(V_n)_{n \geq 1}$  by taking
					\begin{align*}
						\left\{
						\begin{aligned}
							& Y_n:=Y'_{J_n}, \, n \geq 0
							\\
							& C_n:=\widetilde{C}_{J_n}, \, n \geq 1
							\\
							& V_n:=\widetilde{V}_{J_n}, \, n \geq 1.
						\end{aligned}
						\right.
					\end{align*}
					We denote by $\Delta_n$ the support of 
					$Y_n$, and we define
					the filtration $(\mathcal{P}_n)_{n \geq 0}$ to be
					\begin{align*}
						\left\{
						\begin{aligned}
							&
							\mathcal{P}_0:=\sigma(Y_0),\\
							&
							\mathcal{P}_n:=
							\sigma(C_{1},\dots,C_{n},
							V_{1},\dots,V_{n},
							Y_0,\dots,Y_n),
						\end{aligned}
						\right.
					\end{align*}
					By construction, conditionally on		
					$\mathcal{P}_{n}$,	$C_{n+1}$ is distributed 
					uniformly on
					$\Delta_{n}^{R}$. Because $V_{n+1}$ is 
					independent of
					$\mathcal{P}_{n}$,  
					we conclude that the law of $Y$ is the one given in 
					Definition
					\ref{Def_Yn}.
					
					Using (\ref{Monoton}), we see that
					\begin{align}
					\label{Comp_Y_Ytilde}
						Y_n=Y'_{J_n} \leq \widetilde{Y}_{J_n}.
					\end{align}
					We introduce
					\begin{align*}
						\left\{
						\begin{aligned}
							&
							\widetilde{\varepsilon}_{n+1}:=\1_{\widetilde{Y}_n(\widetilde{C}_{n+1}) \geq 
							\widetilde{V}_{n+1}},
							\\
							&
							\varepsilon_{n+1}:=\1_{Y_n(C_{n+1}) \geq 
							V_{n+1}}.
						\end{aligned}
						\right.
					\end{align*}
					Because $\widetilde{f}=a\, \delta_{B(C_0,r_0)}$, we can use Proposition 
					\ref{Conv_Delta_n}, and 
					we obtain that there exists $\widetilde{\kappa}$ almost surely finite such that
					\begin{align}
						\forall n > \widetilde{\kappa}, \quad \widetilde{\varepsilon}_n=0.
					\end{align}
					In particular, this implies that there exists 
					$\kappa$ almost surely finite such that
					\begin{align}
						\forall n > \kappa, \quad \widetilde{\varepsilon}_{J_n}=0.
					\end{align}
					Combined with (\ref{Comp_Y_Ytilde}), this implies that
					\begin{align}
						\forall n > \kappa, \quad \varepsilon_{n}=0,
					\end{align}
					and the conclusion follows.
        \end{proof}
        
		\subsection{The continuous time process is non explosive}

				We are now going to construct explicitly the process $(X_t)_{t \geq 0}$
				with generator (\ref{Gener_ModSLFV_Red}) as a continous time Markov chain,
				by using $(Y_n)_{n \geq 0}$ as the embedded Markov chain.
				
				\begin{Definition}
					Consider an i.i.d sequence 
					$(E_1,E_2,\dots )$ of Exp(1) random variables. We define the 
					\textsl{jump times}
					$(T_0,T_1,\dots )$ by setting $T_0=0$ and
					\begin{align}
					\label{Def_Times_Tn}
							T_n=\frac{E_1}{\lambda(Y_0)}+ 
							\dots+\frac{E_n}{\lambda(Y_{n-1})}, \quad n \geq 1,
					\end{align}
					where $\lambda(f):=|\text{(Supp}(f))^R|$ for $f \in \mathcal{S}_c$.\\
					We can then define a stochastic process 
					$(X_t)_{t\geq0}$ by setting 
					\begin{align}
					\label{Def_Xt}
						\forall n \geq 0, \, \forall t \in [T_n,T_{n+1}),
						\quad 
						X_t&=Y_n.
					\end{align}
				\end{Definition}
				We recall that the set $\text{(Supp}(f))^R$ is the $R$-expansion
				of the support of $f$, that is the set of points at a distance less than $R$
				from the support of $f$. The quantity $\lambda(f)$ is its volume, and 
				it is the rate at which the process $(X_t)_{t \geq 0}$ jumps out of the state $f$. 
				This will be verified in the following proposition
				by checking that we have the correct generator.
				
				\begin{Proposition}
				\label{CTMC}
					The process
					$(X_t)_{t\geq0}$ constructed in (\ref{Def_Xt}) is a non-explosive 
					$\mathcal{S}_c$-valued continuous time Markov chain.
					Moreover, its generator is given by (\ref{Gener_ModSLFV_Red}).
				\end{Proposition}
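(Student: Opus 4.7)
The plan is to establish the two assertions in sequence: first non-explosiveness, then identification of the generator. Both build directly on the preceding material — the first critically on Proposition \ref{Conv_Delta_n_General}, the second is essentially a computation once one recognises $(X_t)$ as a standard CTMC built from an embedded chain with exponential holding times at rate $\lambda(Y_n)$.

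For non-explosion, I would show $T_n \to \infty$ almost surely. Proposition \ref{Conv_Delta_n_General} gives an almost surely finite random time $\kappa$ after which $\Delta_n = B$ is constant, with $B$ almost surely bounded. Consequently, for $n > \kappa$, $\lambda(Y_n) = |\Delta_n^R| = |B^R|$ is an almost surely finite random variable. Writing
\begin{align*}
T_n = T_\kappa + \frac{1}{|B^R|} \sum_{k=\kappa+1}^n E_k,
\end{align*}
the strong law of large numbers applied to the iid Exp(1) sequence $(E_k)$ gives $\sum_{k=\kappa+1}^n E_k \to \infty$ almost surely, hence $T_n \to \infty$. It follows that $(X_t)_{t \ge 0}$ is well defined for all $t \ge 0$ by (\ref{Def_Xt}).

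To verify the Markov property and identify the generator, I would use the classical fact that if we run a discrete-time chain $(Y_n)$ with transitions $Y_n \to Y_{n+1}$ at exponential times with parameter $\lambda(Y_n)$, independently of the past given $Y_n$, then $(X_t)$ is a CTMC whose generator on a bounded measurable test function $I$ is
\begin{align*}
\mathcal{L} I(f) = \lambda(f) \, \mathbb{E}\big[ I(Y_1) - I(Y_0) \,\big|\, Y_0 = f \big].
\end{align*}
Specialising to $I = I_n(\,\cdot\,;\psi)$, and using Definition \ref{Def_Yn}, which samples $C_1$ uniformly on $\text{Supp}(f)^R$ and $V_1$ uniformly on $[0,1]$, this becomes
\begin{align*}
\mathcal{L} I_n(f;\psi) = \int_{\text{Supp}(f)^R} \int_{[0,1]} \big[ I_n(f_{(c,v)};\psi) - I_n(f;\psi) \big] \, dv \, dc,
\end{align*}
where $f_{(c,v)}$ is as in the discussion following the display of $\mathcal{L}$. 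Since for $c \notin \text{Supp}(f)^R$ the transformation $f \mapsto f_{(c,v)}$ leaves $f$ unchanged on the support of $\psi$-relevant integrands (the ball $B(c,R)$ does not meet $\text{Supp}(f)$, but more importantly the integrand $I_n(f_{(c,v)};\psi) - I_n(f;\psi)$ need not vanish when $B(c,R)$ meets the $x_i$'s outside $\text{Supp}(f)$). A small care is needed here: one verifies directly that the integrand vanishes for $c$ outside $\text{Supp}(f)^R \cup \bigcup_{i}\{c:\|c-x_i\|\le R\}$, and handles the residual terms by expanding the products. Then, replacing $I_n$ by its integral form and decomposing the $[0,1]$ integral according to whether $v \le f(c)$ or $v > f(c)$, together with the partition of the indices $\{1,\dots,n\}$ into $I = \{i : x_i \in B(c,R)\}$ and its complement, yields exactly expression (\ref{Gener_ModSLFV_Red}) after straightforward algebra.

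The main obstacle here is the non-explosiveness, since without Proposition \ref{Conv_Delta_n_General} the rates $\lambda(Y_n) = |\Delta_n^R|$ could in principle grow without bound along trajectories, allowing $T_n$ to accumulate. The generator computation is routine bookkeeping and the Markov property is inherited from the construction; the substantive content is precisely that the almost-sure stabilisation of the support rules out explosion.
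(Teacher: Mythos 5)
Your proof is correct and follows essentially the same route as the paper: non-explosion via the almost-sure stabilisation of $\Delta_n$ from Proposition \ref{Conv_Delta_n_General} (so the jump rates are eventually the constant $|B^R|$ and the holding times sum to infinity), and the generator via the standard embedded-chain formula $\mathcal{L}I(f)=\lambda(f)\,\E[I(Y_1)-I(Y_0)\,|\,Y_0=f]$, with $\lambda(f)$ cancelling against the uniform density of $C_1$ on $\text{Supp}(f)^R$. The only superfluous step is your parenthetical worry about $c\notin\text{Supp}(f)^R$ with $B(c,R)$ meeting some $x_i$: there $f(c)=0$ and $f$ vanishes on all of $B(c,R)$, so $f_{(c,v)}=f$ for almost every $v$ and the integrand vanishes outright, which is exactly why the integral over $\text{Supp}(f)^R$ agrees with the integral over $\R^d$ in (\ref{Gener_ModSLFV_Red}).
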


				\begin{proof}
					The first thing to verify is that $X_t$ is really defined for 
					all nonnegative $t$.
					This is equivalent to saying that 
					\begin{align*}
						\mathbb{P} \big[ \,
						T_n \xrightarrow[]{} \infty 
						\text{ as }
						n \rightarrow \infty
						\, \big]=1,
					\end{align*}
					that is
					\begin{align*}
						\mathbb{P}\big[ \, 
						\sum_{n=1}^{\infty} \frac{E_n}{\lambda(Y_{n-1})}
						=\infty
						\, \big]=1.
					\end{align*}
					We show in Proposition \ref{Conv_Delta_n}
					that $(\Delta_n)_{n \geq 0}$ converges in a 
					finite number of steps to a bounded set $B$. This means that 
					almost 
					surely,
					there is a random time $\kappa$ such that for all $n \geq 
					\kappa$,
					\begin{align*}
						\lambda(Y_n)=
							|\text{B}^R|
					\end{align*}
					which implies that
					\begin{align*}
						\sum_{n=\kappa+1}^{\infty} 
						\frac{E_{n}}{\lambda(Y_{n-1})}
						=  \ 
						\frac{\displaystyle \sum_{n={\kappa+1}}^{\infty} 
						E_n}{\displaystyle \,
						|\text{B}^R|}
						=  \ \infty \text{ a.s}.
					\end{align*}
					Hence $(X_t)_{t \geq 0}$ is a stochastic process 
					defined for all $t 
					\geq 0$.
					The Markov property is obvious, and this shows that $(X_t)_{t 
					\geq 0}$
					is a non-explosive continuous time Markov chain.
					We can then write the generator of $(X_t)_{t\geq 0}$ for functions
					$G: \mathcal{S}_c \mapsto \R$ as
					\begin{align*}
						\L G(f)=
						\int_{(\text{Supp}(f))^R}
						\int_0^1 G \big[ 
						f+U \delta_{B(c,R)} (\1_{v \leq f(c)} - f)
						\big]
						-G(f)
						\, dv \, dc.
					\end{align*}
					If we take $G=I_n(\cdot,\psi)$ as defined in (\ref{Test_Func_Red}), the generator
					of $(X_t)_{t\geq0}$ takes the form (\ref{Gener_ModSLFV_Red}).
				\end{proof}
		
		\subsection{Proof of Theorem \ref{Main_Result}}
		\label{Proof_Main_Result}
		
				We have seen in Proposition \ref{CTMC}
				that the process $(X_t)_{t\geq0}$ is a non explosive 
				continuous 
				time Markov chain. Therefore, the trajectories of $(X_t)_{t\geq0}$
				are completely described by its embedded Markov chain $(Y_n)_{n \geq 0}$.
				
				In particular, for all $n \geq 0$, for all $t \in [T_n,T_{n+1})$, 
				$\text{Supp}(X_t)=\Delta_n$.
				Using the result from Proposition \ref{Conv_Delta_n_General},
				and the the sequence of times $(T_n)_{n\geq0}$ defined in (\ref{Def_Times_Tn}),
				there exists a finite random set $B \subset \mathbb{R}^d$, and an
				almost surely finite
				random time 
				$T:=T_{\kappa}$, such that
				\begin{equation}
					\forall t > T, \,
					\text{Supp}(X_t) = B \quad \text{a.s.}
				\end{equation} 

				The second point to prove here is the extinction of the population.
				From Proposition (\ref{Conv_Delta_n_General}),
				\begin{align}
					\forall n > \kappa, \quad \varepsilon_n=0.
				\end{align}
				This implies that at every point $x$,
				the frequency $(X_t(x))_{t \geq T}$ converges geometrically to zero,
				which concludes the proof.
				
	\section{Conclusion}
		
        Although the S$\Lambda$FV process is constructed in great generality,
        our study was restricted to the case where $R$ and $U$ are  constant.
		In the setting described in \cite{BARTON_ETHERIDGE_VEBER_2010}, these quantities 
		can be made random by adding extra dimensions to the space-time Poisson point process. We then 
		define $\Pi$ on the space $[0,\infty) \times \mathbb{R}^d \times [0,1] \times (0,\infty) \times [0,1] $, 
		with intensity $dt\otimes dc \otimes dv \times \zeta(dr,du)$, such that
		$$
        	\int_{(0,\infty) \times [0,1]} u \, r^d \zeta(dr,du)< \infty.		
		$$
				
        Our result holds in the case $\zeta(dr,du)=\delta_{R,U}(dr,du)$ because the volume of $\Delta_n$ increases
        at most linearly with $n$.         
        We could imagine extending the same result using
        practically the same method to the case
        where
        \begin{align*}
        	\int_{(0,\infty) \times [0,1]} r^d \zeta(dr,du) < \infty,
        \end{align*}
        because the process still jumps at finite rate, and the
        volume of $\Delta_n$ is at most of order $n \E(R)$, where $R$
        is a realisation of the random radius. The problem comes from
        the fact that the radii being random makes the construction
        of the Markov chain more complicated. Morally the result remains true
        in this case, but the proof becomes significantly more involved.\\
        
        The situation where 
        \begin{align*}
        	\int_{(0,\infty) \times [0,1]} r^d \zeta(dr,du) = \infty
        \end{align*}
        is radically different, because now the process jumps at an infinite rate.
        The problem is that we do not have a description of the geometry of
        the process at time $t>0$. The behaviour is not obvious, and it cannot be
        simulated. For us this remains an open question, which would certainly require 
        different techniques.

	\bibliographystyle{plain}
	\bibliography{Biblio_Phd}
	
\end{document}